\title[Modules for Divergence Zero Vector Fields on a Torus]{Irreducible Modules for the Lie Algebra of Divergence Zero Vector Fields on a Torus}
\date{}
\author{John Talboom}
\newtheorem{thm}{Theorem}[section]
\newtheorem{cor}[thm]{Corollary}
\newtheorem{lem}[thm]{Lemma}
\newtheorem{prop}[thm]{Proposition}
\newtheorem{remark}[thm]{Remark}
\begin{document}
\maketitle
\begin{abstract}
This paper investigates the irreducibility of certain representations for the Lie algebra of divergence zero vector fields on a torus. In \cite{R} Rao constructs modules for the Lie algebra of polynomial vector fields on an $N$-dimensional torus, and determines the conditions for irreducibility. The current paper considers the restriction of these modules to the subalgebra of divergence zero vector fields. It is shown here that Rao's results transfer to similar irreducibility conditions for the Lie algebra of divergence zero vector fields.
\end{abstract}

Key Words: \keywords{Lie algebra of vector fields; weight module; irreducible representations}

2000 Mathematics Subject Classification: \subjclass{17B10, 17B66}

\section{Introduction}
This paper investigates the irreducibility of tensor modules for the Lie algebra of divergence zero vector fields on a torus. The Lie algebra of derivations of $A=\mathbb{C}[t_1^{\pm1},\dots,t_{N}^{\pm1}]$, denoted by $\mathcal{D}$, may be interpreted as the Lie algebra of polynomial vector fields on an $N$-dimensional torus (see Section 2). In \cite{R} Rao determines the conditions for irreducibility of tensor modules for $\mathcal{D}$. This originates from analogous results established earlier by Rudakov in \cite{Ru} for the Lie algebra of vector fields on an affine space. Tensor modules were also studies by Shen in \cite{S}.

Let $\mathfrak{gl}_N$ be the Lie algebra of $N\times N$ matrices with complex entries, and $\mathfrak{sl}_N$ its subalgebra of trace zero matrices. Then $\mathfrak{gl}_N=\mathfrak{sl}_N\oplus\mathbb{C} I$ where $I$ is the identity matrix. An irreducible $\mathfrak{gl}_N$-module can be constructed by taking $V(\lambda)$,	 the unique finite dimensional irreducible $\mathfrak{sl}_N$-module of highest weight $\lambda$, and specifying that $I$, since it is central, acts by complex number $b$. Denote this $\mathfrak{gl}_N$-module by $V(\lambda,b)$. The tensor product $V(\lambda,b)\otimes A$ becomes a module for a specified action of $\mathcal{D}$.  Rao shows that these modules are irreducible unless $(\lambda,b)=(\omega_k,k)$ for $k\in\{1,\dots,N-1\}$, where $\omega_k$ is a fundamental dominant weight (see Section 3), or if $\lambda$ is zero. In these special cases irreducible quotients and submodules are determined.

The current paper is motivated by the results of \cite{R}, and considers the restriction of these tensor modules to the subalgebra of divergence zero vector fields, denoted $\mathcal{D}_{\text{div}}$. It will be shown in section 2 that the restriction to divergence zero vector fields results in limiting the action to only the $\mathfrak{sl}_N$ part of the modules. Thus the $\mathcal{D}_{\text{div}}$-modules considered here are of the form $V(\lambda)\otimes A$. A streamlined proof of Rao's result was given in \cite{GZ} and the current paper has drawn inspiration from these ideas and applied them here. The result, given in Theorem \ref{results}, mirrors that of Rao's and it is shown that these $\mathcal{D}_{\text{div}}$-modules are irreducible if $\lambda\neq\omega_k,0$.

To understand the cases when $\lambda=\omega_k,0$, the corresponding tensor modules can be realized as the modules of differential $k$-forms on a torus. The tensor modules $V(\omega_k)\otimes A$ for $k\in\{1,\dots,N-1\}$ are isomorphic to $\Omega^k$, the module (over Laurent polynomials) of differential $k$-forms on a torus. The modules of functions $\Omega^0$, and $N$-forms $\Omega^N$, correspond to tensor modules $V(0)\otimes A$, where $V(0)$ is a one-dimensional $\mathfrak{sl}_N$-module. It is useful to consider the de Rham complex of differential forms,
\[\Omega^0\xrightarrow{d}\Omega^1\xrightarrow{d}\dots\xrightarrow{d}\Omega^N\]
(where $d$ is the exterior derivative) since $d$ yields a homomorphism between tensor modules. Thus kernels and images of $d$ are submodules of the $\Omega^k$ and these will be used to determine submodules of the $\mathcal{D}_{\text{div}}$-modules.

In Section 2 the Lie algebra $\mathcal{D}_{\text{div}}$ will be presented, as well as the notation for its homogeneous elements, and a spanning set. The above-mentioned $\mathcal{D}_{\text{div}}$-modules are defined in Section 3 and classified as being minuscule or non minuscule. Two important properties of $\mathcal{D}_{\text{div}}$-modules are given in this section pertaining to the size of these modules and the action of $\mathcal{D}_{\text{div}}$. In Section 4 it is seen that highest weight vectors may always be obtained in modules of non minuscule type. The final section contains the main result, and here it is proven that non minuscule $\mathcal{D}_{\text{div}}$-modules are irreducible, while minuscule modules have irreducible submodules and quotients.
\section{Preliminaries}
Fix $N\in\mathbb{N}$ and consider the column vector space $\mathbb{C}^{N+1}$ with standard basis $\{e_1,\dots,e_{N+1}\}$. Let $(\cdot|\cdot)$ be symmetric bilinear form $(u|v)=u^Tv\in\mathbb{C}$, where $u,v\in\mathbb{C}^{N+1}$ and $u^T$ denotes matrix transpose.

Let $A=\mathbb{C}[t_1^{\pm1},\dots,t_{N+1}^{\pm1}]$ be the Laurent polynomials over $\mathbb{C}$. Elements of $A$ will be presented with the multi-index notation $t^r=t_1^{r_1}\dots t_{N+1}^{r_{N+1}}$ for $r=(r_1,\dots,r_{N+1})\in\mathbb{Z}^{N+1}$. Also, for $i\in\{1,\dots,N+1\}$, let $d_i=t_i\frac{\partial}{\partial t_i}$. The vector space of all of derivations of $A$, $\text{Der}(A)$, forms a Lie algebra called the Witt algebra denoted here by $\mathcal{D}$. Note that $\text{Der}(A)=\text{Span}_{\mathbb{C}}\left\{t^rd_i|i\in\{1,\dots,N+1\}, r\in\mathbb{Z}^{N+1}\right\}$. Homogeneous (in the power of $t$) elements  of $\mathcal{D}$ will be denoted $D(u,r)=\sum_{i=1}^{N+1}u_it^rd_i$ for any $u\in\mathbb{C}^{N+1},r\in\mathbb{Z}^{N+1}$. Its Lie bracket is given by
\[[D(u,r),D(v,s)]=D\left((u|s)v-(v|r)u,r+s\right)\]
for $u,v\in\mathbb{C}^{N+1},r,s\in\mathbb{Z}^{N+1}$.

Geometrically, $\mathcal{D}$ may be interpreted as the Lie algebra of (complex-valued) polynomial vector fields on an $N+1$-dimensional torus, via the mapping $t_j=e^{ix_j}$ for all $j\in\{1,\dots,N+1\}$, where $x_j$ represents the $j$th angular coordinate. This has an interesting subalgebra, the Lie algebra of divergence-zero vector fields, and the corresponding subalgebra of $\mathcal{D}$ is denoted by $\mathcal{D}_{\text{div}}$.
\begin{prop}
Let $u\in\mathbb{C}^{N+1},r\in\mathbb{Z}^{N+1}$. Then $D(u,r)\in\mathcal{D}_{\text{div}}$ if and only if $(u|r)=0$.
\end{prop}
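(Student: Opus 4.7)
The plan is to translate the geometric notion of divergence into an algebraic formula on $\mathcal{D}$ and then apply it directly to the homogeneous element $D(u,r)$. Recall that under the substitution $t_j = e^{ix_j}$, we have $\partial/\partial x_j = i\, d_j$, so any polynomial vector field $X = \sum_j f_j\, \partial/\partial x_j$ corresponds to a derivation of $A$ of the form $\sum_j g_j d_j$ with $g_j \in A$. Since the geometric divergence is $\mathrm{div}(X) = \sum_j \partial f_j/\partial x_j$, translating through the change of coordinates gives the algebraic expression
\[
\mathrm{div}\Bigl(\sum_j g_j d_j\Bigr) = \sum_j d_j(g_j),
\]
up to a nonzero scalar that is irrelevant to the vanishing condition. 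So my first step would be to record this formula, either as a direct definition or as a short calculation justifying the translation.

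Next, I would apply this formula to $D(u,r) = \sum_i u_i t^r d_i$. Here $g_i = u_i t^r$, and since $d_i$ acts on $t^r$ by multiplication by $r_i$, we get $d_i(u_i t^r) = u_i r_i t^r$. Summing over $i$ yields
\[
\mathrm{div}\bigl(D(u,r)\bigr) = \Bigl(\sum_{i=1}^{N+1} u_i r_i\Bigr) t^r = (u|r)\, t^r.
\]
The final step is to observe that $t^r$ is a unit in $A$, so $\mathrm{div}(D(u,r)) = 0$ if and only if $(u|r) = 0$, which is exactly the desired equivalence.

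There is no genuine obstacle here; the only subtlety is making sure the reader is comfortable with the algebraic definition of $\mathrm{div}$ on $\mathrm{Der}(A)$, since the excerpt only introduces divergence geometrically. Accordingly, I would either insert one line explaining the geometric-to-algebraic translation, or take the formula $\mathrm{div}(\sum g_j d_j) = \sum d_j(g_j)$ as the working definition and note that it agrees with the classical divergence on the torus up to the factor of $i$ coming from $t_j = e^{ix_j}$.
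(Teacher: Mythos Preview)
Your proposal is correct and follows essentially the same route as the paper: translate the geometric divergence into the algebraic formula $\mathrm{div}\bigl(\sum_j g_j d_j\bigr)=\sum_j d_j(g_j)$ via $t_j=e^{ix_j}$, apply it to $D(u,r)$ to obtain $(u|r)\,t^r$, and conclude. The only cosmetic difference is that the paper tracks the factor of $i$ explicitly rather than absorbing it into an ``up to a nonzero scalar'' remark.
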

\begin{proof}
Note that under the mapping $t_j=e^{ix_j}$, $\frac{\partial}{\partial x_j}=\frac{\partial t_j}{\partial x_j}\cdot\frac{\partial}{\partial t_j}=it_j\frac{\partial}{\partial t_j}=id_j$, so that $v=\sum_{j=1}^{N+1}f_j(t)d_j\in\mathcal{D}$ can be written $v=-i\sum_{j=1}^{N+1}f_j(t)\frac{\partial}{\partial x_j}$. The divergence of $v$ with respect to the natural volume form in angular coordinates is then $-i\sum_{j=1}^{N+1}\frac{\partial f_j}{\partial x_j}=\sum_{j=1}^{N+1}t_j\frac{\partial f_j}{\partial t_j}$. Thus $\text{div} D(u,r)=\sum_{j=1}^{N+1}u_jr_jt^r=0$ if and only if $(u|r)=0$.
\end{proof}
This proposition uses elements of $\mathcal{D}$ which are homogeneous in $t$, and by linear independence of the powers of $t$ it may be applied to the general case. An element of $\mathcal{D}$ is in $\mathcal{D}_{\text{div}}$ if and only if its homogeneous components are in $\mathcal{D}_{\text{div}}$. It follows that $\mathcal{D}_{\text{div}}=\text{Span}_{\mathbb{C}}\left\{d_a,r_{b}t^rd_a-r_at^rd_{b}|a,b\in\{1,\dots,N+1\}, r\in\mathbb{Z}^{N+1}\right\}$.

Note that for $D(u,r),D(v,s)\in\mathcal{D}_{\text{div}}$, $\left((u|s)v-(v|r)u|r+s\right)=0$, demonstrating that $\mathcal{D}_{\text{div}}$ is closed under the bracket of $\mathcal{D}$. $\mathcal{D}_{\text{div}}$ has Cartan subalgebra $\mathcal{H}=\text{Span}_{\mathbb{C}}\{d_j|j\in\{1,\dots,N+1\}\}=\{D(u,0)|u\in\mathbb{C}^{N+1}\}$.
\section{$\mathcal{D}_{\text{div}}$-Modules}
Let $\mathfrak{sl}_{N+1}$ be the Lie algebra of all $(N+1)\times (N+1)$ traceless matrices with entries in $\mathbb{C}$. Fix a root system $\Phi$, of type $A_N$, with positive roots $\Phi^+$ and simple roots $\Delta=\{\alpha_1,\dots,\alpha_N\}$.

Let $V(\lambda)$ be a finite dimensional irreducible $\mathfrak{sl}_{N+1}$-module of highest weight $\lambda=C_1\omega_1+\dots+C_N\omega_N$, where $\langle\omega_i,\alpha_j\rangle\equiv\frac{2(\omega_i|\alpha_j)}{(\alpha_j|\alpha_j)}=\delta_{ij}$ and $C_i\in\mathbb{Z}_{\geq0}$ for all $i,j\in\{1,\dots,N\}$.
Any weight $\gamma$ of $V(\lambda)$ can be written $\gamma=\lambda-\sum_{i=1}^N\gamma_i\alpha_i$ where the $\gamma_i$'s are non-negative integers. For convenience assign to each weight a \emph{label}, defined as the $N$-tuple $(\langle\gamma,\alpha_1\rangle,\dots,\langle\gamma,\alpha_N\rangle)$. Thus $\lambda$ has label $(C_1,\dots,C_N)$. Note that here a weight of $V(\lambda)$, means only those assigned to a nonzero weight space.

Let $\gamma$ be a weight in $V(\lambda)$ and $\alpha\in\Phi$ (not necessarily simple). Then the set of weights of the form $\gamma+i\alpha$, for $i\in\mathbb{Z}$, is called the \emph{$\alpha$-string through $\gamma$}. let $r,q\in\mathbb{Z}_{\geq0}$ be the largest integers such that $\gamma-r\alpha$ and $\gamma+q\alpha$ are weights of $V(\lambda)$. Then $\gamma+i\alpha$ is a weight for all $-r\leq i\leq q$; i.e. the $\alpha$-string through $\gamma$ is unbroken from $\gamma-r\alpha$ to $\gamma+q\alpha$. Furthermore it can be seen that $\langle\gamma,\alpha\rangle=r-q$. These unbroken sets of weights will be referred to as \emph{weight strings}, and the number of weights in this set its \emph{length}. In this particular case the weight string is in the $\alpha$ direction.

The Weyl group for $A_N$, is isomorphic to $S_{N+1}$, the symmetric group on $N+1$ symbols, and so elements of the Weyl group will be denoted with cycle notation. The reflection through simple root $\alpha_i$ is the transposition $(i,i+1)$, and the reflection through a root of the form $\alpha_i+\dots+\alpha_{i+j}$ is the transposition $(i,i+j+1)$, for $1\leq i<i+j\leq N$.
\begin{lem}\label{size of diagram}
Let $\lambda_{\ell}$ denote the lowest weight of $V(\lambda)$. Write $\lambda_{\ell}=\lambda-\sum_{i=1}^N\kappa_i\alpha_i$, where $\kappa_i\in\mathbb{Z}_{\geq0}$. Then for $1\leq j\leq\lfloor \frac{N+1}{2}\rfloor$,
\[\kappa_j=\kappa_{N+1-j}=\sum_{i=1}^NC_i +\sum_{i=2}^{N-1}C_i +\dots+\sum_{i=j}^{N+1-j}C_i\]
\end{lem}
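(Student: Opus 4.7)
The plan is to work in the standard $\epsilon$-realization of the weight lattice of $\mathfrak{sl}_{N+1}$. Take $\epsilon_1,\dots,\epsilon_{N+1}$ with the relation $\sum_k\epsilon_k=0$, so that $\alpha_j=\epsilon_j-\epsilon_{j+1}$ and $\omega_i=\epsilon_1+\cdots+\epsilon_i$. Writing $\lambda=\sum_k\lambda_k\epsilon_k$, one reads off $\lambda_k=\sum_{i=k}^N C_i$ (with $\lambda_{N+1}=0$). The lowest weight is $\lambda_\ell=w_0\lambda$ where $w_0$ is the longest Weyl group element; for $A_N$ this is the order-reversing permutation in $S_{N+1}$, which acts on the $\epsilon$-basis by $\epsilon_k\mapsto\epsilon_{N+2-k}$. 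Hence $(\lambda_\ell)_k=\lambda_{N+2-k}$.

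Substituting $\alpha_j=\epsilon_j-\epsilon_{j+1}$ and matching $\epsilon_k$-coefficients in $\lambda-\lambda_\ell=\sum_{j=1}^N\kappa_j\alpha_j$ then yields the telescoping relation
$$\kappa_k-\kappa_{k-1}=\lambda_k-\lambda_{N+2-k}\qquad(k=1,\dots,N+1),$$
with the convention $\kappa_0=\kappa_{N+1}=0$. For $1\leq k\leq\lfloor(N+1)/2\rfloor$ one has $N+1-k\geq k$, so the right-hand side collapses to $\sum_{i=k}^{N+1-k}C_i$. Summing from $k=1$ to $k=j$ then gives
$$\kappa_j=\sum_{k=1}^j\sum_{i=k}^{N+1-k}C_i,$$
which is precisely the asserted formula.

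The palindromic symmetry $\kappa_j=\kappa_{N+1-j}$ follows for free: the differences $d_k:=\lambda_k-\lambda_{N+2-k}$ are antisymmetric under $k\mapsto N+2-k$ and sum to zero (which is equivalent to the boundary condition $\kappa_{N+1}=0$), and a brief reindexing identifies $\sum_{k=1}^{N+1-j}d_k$ with $\sum_{k=1}^{j}d_k$. The only genuine piece of work is the range-simplification of $\lambda_k-\lambda_{N+2-k}$ to $\sum_{i=k}^{N+1-k}C_i$ in the middle step; I expect this to be the main (though entirely routine) bookkeeping in the argument, since everything else is forced by the $\epsilon$-basis translation.
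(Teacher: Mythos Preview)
Your argument is correct. Both your proof and the paper's rely on the identification of $\lambda_\ell$ with $w_0\lambda$, where $w_0$ is the longest Weyl group element (the order-reversing permutation), but the implementations diverge from there.

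The paper factors $w_0$ as the product of commuting transpositions $(1+p,\,N+1-p)$ and applies them to $\lambda$ one at a time, proving by induction on the step index that the $j$th reflection subtracts $\bigl(\sum_{i=j}^{N+1-j}C_i\bigr)(\alpha_j+\cdots+\alpha_{N+1-j})$. This requires tracking the intermediate weights $\lambda_j$ and verifying at each stage that the cross-terms $\langle \alpha_p+\cdots+\alpha_{N+1-p},\,\alpha_{j+1}+\cdots+\alpha_{N-j}\rangle$ vanish, which is where the bulk of the computation in the paper lies.

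Your route bypasses this induction entirely by passing to $\epsilon$-coordinates, where $w_0$ acts transparently and the relation $\kappa_k-\kappa_{k-1}=\lambda_k-\lambda_{N+2-k}$ is read off directly; the telescoping then gives the closed form in one shot, and the symmetry $\kappa_j=\kappa_{N+1-j}$ drops out from the antisymmetry of the differences. This is shorter and more conceptual, at the modest cost of invoking the $\epsilon$-realization (which the paper never introduces). The paper's approach, by contrast, stays entirely within the root-basis language already set up in Section~3 and makes the ``nested shell'' structure of the formula visible as a byproduct of the reflection sequence.
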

\begin{proof}\label{}
Consider the Weyl group element
\[w=\prod_{p=0}^{\lfloor\frac{N}{2}\rfloor}(1+p, N+1-p).\]
Then $w(\alpha_i)=-\alpha_{N+1-i}$ for $1\leq i\leq N$ and it follows that $w$ maps the highest weight of the $\mathfrak{sl}_{N+1}$-module to its lowest weight. The above transpositions are disjoint and hence commute. Let 
\[\lambda_j=\left(\prod_{p=0}^{j-1}(1+p,N+1-p)\right)(\lambda)\]
for $j=1,\dots,\lfloor\frac{N}{2}\rfloor+1$ and $\lambda_0=\lambda$. Then
\begin{align*}
\lambda_1&=(1,N+1)(\lambda)\\
&=\lambda_0-\langle\lambda_0,\alpha_1+\dots+\alpha_N\rangle(\alpha_1+\dots+\alpha_N)\\
&=\lambda_0-\left(\sum_{i=1}^NC_i\right)(\alpha_1+\dots+\alpha_N).
\end{align*}
Assume $\lambda_j=\lambda_{j-1}-\sum_{i=j}^{N+1-j}C_i(\alpha_j+\dots+\alpha_{N+1-j})$ for some $j\geq1$. Then
\begin{align*}
\lambda_{j+1}&=(1+j,N+1-j)(\lambda_j)\\
&= \lambda_j-\langle\lambda_j,\alpha_{j+1}+\dots+\alpha_{N-j}\rangle(\alpha_{j+1}+\dots+\alpha_{N-j})\\
&= \lambda_j-\left\langle\lambda-\left(\sum_{i=1}^NC_i\right)(\alpha_1+\dots+\alpha_N)-\left(\sum_{i=2}^{N-1}C_i\right)(\alpha_2+\dots+\alpha_{N-1})\right.\\
&\quad-\dots\\
&\quad- \left.\left(\sum_{i=j}^{N-j}C_i\right)(\alpha_j+\dots+\alpha_{N-j}),\alpha_{j+1}+\dots+\alpha_{N-j}\right\rangle(\alpha_{j+1}+\dots+\alpha_{N-j})\\	
&= \lambda_j-\langle\lambda,\alpha_{j+1}+\dots+\alpha_{N-j}\rangle(\alpha_{j+1}+\dots+\alpha_{N-j})\\
&\quad- \left(\sum_{i=1}^NC_i\right)\left\langle\alpha_1+\dots+\alpha_N,\alpha_{j+1}+\dots+\alpha_{N-j}\right\rangle(\alpha_{j+1}+\dots+\alpha_{N-j})\\
&\quad- \left(\sum_{i=2}^{N-1}C_i\right)\left\langle\alpha_2+\dots+\alpha_{N-1},\alpha_{j+1}+\dots+\alpha_{N-j}\right\rangle(\alpha_{j+1}+\dots+\alpha_{N-j})\\
&\quad-\dots\\
&\quad- \left(\sum_{i=j}^{N-j}C_i\right)\left\langle\alpha_j+\dots+\alpha_{N-j},\alpha_{j+1}+\dots+\alpha_{N-j}\right\rangle(\alpha_{j+1}+\dots+\alpha_{N-j})\\	
&= \lambda_j-\left(\sum_{i=j+1}^{N-j}C_i\right)(\alpha_{j+1}+\dots+\alpha_{N-j}),
\end{align*}
since the angled bracket evaluates to zero in all but the second term. Then by induction
\[w(\lambda)=\lambda_{\lfloor\frac{N}{2}\rfloor+1}=\lambda-\sum_{i=1}^{\lfloor \frac{N}{2}\rfloor+1}\sum_{j=i}^{N+1-i}C_j\sum_{k=i}^{N+1-i}\alpha_k\]
with the final term being zero if $N$ even. Since $w(\lambda)=\lambda_{\ell}$, extracting coefficients of the $\alpha_i$ yields the result.
\end{proof}
Denote $F^{\sigma}(\lambda)= V(\lambda)\otimes\mathbb{C}[q_1^{\pm1},\dots,q_{N+1}^{\pm1}]$ for $\sigma\in\mathbb{C}^{N+1}$. Homogeneous elements of $F^{\sigma}(\lambda)$ will be denoted $v(n)=v\otimes q^n$ for $v\in V(\lambda),n\in\mathbb{Z}^{N+1}$. Then $F^{\sigma}(\lambda)$ becomes a module for $\mathcal{D}_{\text{div}}$ via the action
\[D(u,r).v(n)=(u|n+\sigma)v(n+r)+(ru^T)v(n+r),\]
for $D(u,r)\in\mathcal{D}_{\text{div}}$. Note that $ru^T$ is an $(N+1)\times (N+1)$ matrix with trace zero, as $(u|r)=0$ by assumption, and so the second term involves the $\mathfrak{sl}_{N+1}$ module action. Elements of $\mathfrak{sl}_{N+1}$ are denoted in the usual way, where $E_{ij}$ is the matrix with a 1 in the $(i,j)$ position and zeros elsewhere.

Modules $V(\lambda)$ and $F^{\sigma}(\lambda)$ will be called \emph{minuscule} if  $\lambda=\omega_i$ or $\lambda=0$ for $i\in\{1,\dots,N\}$ (see \cite{BL} 2.11.15). Write $F^{\sigma}(\lambda)=\bigoplus_{n\in\mathbb{Z}^{N+1}}V(\lambda)\otimes q^n$ and let $v(n)\in V(\lambda)\otimes q^n$. Then
$D(e_i,0).v(n)=(n_i+\sigma_i)v(n)$ for all $i\in\{1,\dots,N+1\}$, so that the subspace $V(\lambda)\otimes q^n$ is homogeneous in $q$ and thus $F^{\sigma}(\lambda)$ is a graded module with respect to the action of $\mathcal{H}$. Any submodule $M$ of $F^{\sigma}(\lambda)$ will inherit this gradation and so $M=\bigoplus_{n\in\mathbb{Z}^{N+1}}(V(\lambda)\otimes q^n)\cap M$. It follows that $M$ contains elements of the form $v(n)$ (homogeneous elements).
\begin{prop}\label{actions}
Let $M$ be a submodule of $F^{\sigma}(\lambda)$. If $v(n)\in M$ then $(E_{ij}^kv)(n)\in M$ for any integer $k\geq2$ and any $i\neq j\in\{1,\dots,N+1\}$.
\end{prop}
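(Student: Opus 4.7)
The plan is to exhibit elements of $\mathcal{D}_{\text{div}}$ whose iterated action on $v(n)$ isolates $(E_{ij}^k v)(n)$. For $i \neq j$ and $s \in \mathbb{Z}$, set $X_s := D(e_j, s e_i)$. Since $(e_j | s e_i) = 0$, each $X_s$ lies in $\mathcal{D}_{\text{div}}$, and since $(s e_i) e_j^T = s E_{ij}$, the defining action gives
\[X_s . v(n) = (n_j+\sigma_j)\, v(n + s e_i) + s\, (E_{ij} v)(n + s e_i).\]
Two observations drive the argument: because $i \neq j$, the scalar $n_j + \sigma_j$ is unaffected by further shifts in the $e_i$ direction, and a direct bracket computation gives $[X_s, X_{s'}] = 0$ for all $s, s'$.

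Consequently, for any tuple $(s_1, \ldots, s_k)$ with $\sum_{\ell} s_{\ell} = 0$ the composition $X_{s_1} \cdots X_{s_k}$ returns to degree $n$, and because $E_{ij}$ commutes with scalars and with itself,
\[X_{s_1} \cdots X_{s_k}.v(n) = \sum_{m=0}^{k} e_m(s_1,\ldots,s_k)\,(n_j+\sigma_j)^{k-m}\,(E_{ij}^m v)(n),\]
where $e_m$ is the $m$-th elementary symmetric polynomial. The left-hand side lies in $M$.

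The decisive point is that the $m = 1$ coefficient equals $e_1 = \sum_\ell s_\ell = 0$, so the otherwise troublesome $(E_{ij} v)(n)$ term drops out automatically. I would then argue by induction on $k$. For the base case $k = 2$, take $(s_1, s_2) = (1, -1)$, which gives $X_1 X_{-1}.v(n) = (n_j+\sigma_j)^2 v(n) - (E_{ij}^2 v)(n)$ and hence $(E_{ij}^2 v)(n) \in M$. For the inductive step, assuming $(E_{ij}^m v)(n) \in M$ for $2 \leq m < k$, take $s_1 = \cdots = s_{k-1} = 1$ and $s_k = -(k-1)$: the leading coefficient $e_k = s_1 \cdots s_k = -(k-1)$ is nonzero, and every other term in the sum is already known to lie in $M$ by the induction hypothesis together with $v(n) \in M$, so one may solve for $(E_{ij}^k v)(n)$.

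The subtle feature is really the explanation for the hypothesis $k \geq 2$: the divergence-free constraint $(u|r) = 0$ is exactly what forces $e_1(s_1,\ldots,s_k) = \sum_\ell s_\ell$ to vanish on every composition $X_{s_1} \cdots X_{s_k}$ landing at degree $n$. This simultaneously removes the obstruction for $k \geq 2$ and blocks the same technique from producing $(E_{ij} v)(n)$, so the only real work is recognizing this symmetry and organizing the arithmetic into a clean induction.
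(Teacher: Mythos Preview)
Your proof is correct and follows essentially the same approach as the paper: compose operators $D(e_j, s_\ell e_i)$ with $\sum_\ell s_\ell = 0$, observe that the product expands as $\prod_\ell\bigl((n_j+\sigma_j)+s_\ell E_{ij}\bigr)$ so the $E_{ij}^1$ coefficient vanishes, handle $k=2$ with $(1,-1)$, and finish by induction. You are simply more explicit than the paper, which writes the expansion as $K_0^k + K_1 E_{ij} + \cdots + K_k E_{ij}^k$, notes $K_1=0$ and $K_k\neq 0$, and then says ``the result follows by induction on $k$'' without naming a specific tuple for the inductive step; your choice $s_1=\cdots=s_{k-1}=1$, $s_k=-(k-1)$ and your identification of the coefficients with elementary symmetric polynomials fill in exactly those details.
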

\begin{proof}\label{}
Note that $D(e_j,re_i)\in\mathcal{D}_{\text{div}}$ for any $r\in\mathbb{Z}$, as $(e_j|e_i)=0$ for $i\neq j$. Fix an integer $k\geq2$, and choose $r_1,\dots,r_k\in\mathbb{Z}\setminus\{0\}$ such that $\sum_{p=1}^kr_p=0$. From the module action given above, $D(e_j,re_i).v(n)=\left((e_j|n+\sigma)+rE_{ij}\right)v(n+re_i)$. Thus
\begin{align*}
D(e_i,r_1e_j)\dots D(e_i,r_ke_j).v(n) &= (K_0+r_1E_{ij})\dots(K_0+r_kE_{ij})v\left(n+\sum_{p=1}^kr_p\right)\\
&= \left(K_0^k+K_1E_{ij}+K_2E_{ij}^2+\dots+K_kE_{ij}^k\right)v(n),
\end{align*}
where $K_0=(e_j|n+\sigma)$, $K_1=K_0^{k-1}\sum_{p=1}^kr_p=0$, and $K_k$ is nonzero. For the case $k=2$, take $r_1=1,r_2=-1$. Then $D(e_j,e_i).D(e_j,-e_i).v(n)=K_0^2v(n)-E_{ij}^2v(n)\in M$, and hence $E_{ij}^2v(n)\in M$. The result follows by induction on $k$.
\end{proof}
\section{Existence of Weight Vectors}	
Recall that the root system of type $\mathfrak{sl}_{N+1}$ has highest root $\theta=\alpha_1+\dots+\alpha_N$. A weight string of maximal length in the $\theta$ direction will be called a \emph{maximal $\theta$-string}.
\begin{prop}\label{max theta string}
	A $\theta$-string through $\lambda$ is a maximal $\theta$-string, and the highest weight of any maximal $\theta$-string has the form $\lambda-\sum_{i=2}^{N-1}\gamma_i\alpha_i$ for some non-negative integers $\gamma_i$.
\end{prop}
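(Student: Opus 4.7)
The plan is to use Lemma \ref{size of diagram} to bound the length of any $\theta$-string, check that the $\theta$-string through $\lambda$ already attains that bound, and then use tightness of the bound to force the top weight of a maximal $\theta$-string into the required form.

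First I would compute the length of the $\theta$-string through $\lambda$ directly. Since $\lambda$ is the highest weight of $V(\lambda)$, $\lambda+\theta$ cannot be a weight, so in the notation from the text we have $q=0$ for this string and consequently its length is $r+1$, where $r=\langle\lambda,\theta\rangle=\langle\lambda,\alpha_1+\dots+\alpha_N\rangle=\sum_{i=1}^N C_i$. Thus the $\theta$-string through $\lambda$ runs from $\lambda-(\sum_i C_i)\theta$ up to $\lambda$ and has length $\sum_i C_i+1$.

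Next I would show that no $\theta$-string is longer. Let $\mu=\lambda-\sum_{i=1}^N m_i\alpha_i$ (with $m_i\in\mathbb{Z}_{\geq 0}$) be the highest weight of an arbitrary $\theta$-string of length $\ell$. Its bottom weight $\mu-(\ell-1)\theta=\lambda-\sum_i(m_i+\ell-1)\alpha_i$ is again a weight of $V(\lambda)$. Here I invoke the standard fact that every weight $\gamma=\lambda-\sum_i c_i\alpha_i$ of $V(\lambda)$ satisfies $c_i\leq\kappa_i$ for all $i$; equivalently, every weight lies above $\lambda_\ell$ in the partial order coming from the positive root lattice. (This is immediate from the fact that $-\lambda_\ell$ is the highest weight of the dual module $V(\lambda)^*$, whose weights are the negatives of those of $V(\lambda)$.) Applying this at $i=1$ and using $\kappa_1=\sum_{j=1}^N C_j$ from Lemma \ref{size of diagram} yields $m_1+\ell-1\leq\sum_j C_j$, whence $\ell-1\leq\sum_j C_j$. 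This proves that the $\theta$-string through $\lambda$ is maximal.

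Finally, suppose the $\theta$-string under consideration is itself maximal, so $\ell-1=\sum_j C_j$. Then the inequality $m_1+\sum_j C_j\leq\kappa_1=\sum_j C_j$ forces $m_1=0$, and the analogous inequality at $i=N$, using $\kappa_N=\kappa_1=\sum_j C_j$ (again Lemma \ref{size of diagram}), forces $m_N=0$. Hence $\mu=\lambda-\sum_{i=2}^{N-1}m_i\alpha_i$, as required. The main obstacle is the bound $c_i\leq\kappa_i$: it is classical but not proved in the excerpt, so I would need to insert a one-line justification (either via the dual module, or equivalently by noting that the longest Weyl element $w_0$ sends $\lambda$ to $\lambda_\ell$ and that each weight of $V(\lambda)$ lies between $\lambda_\ell$ and $\lambda$ in the root-lattice order). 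Everything else is mechanical from Lemma \ref{size of diagram}.
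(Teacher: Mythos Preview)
Your proposal is correct and follows essentially the same approach as the paper: compute the length of the $\theta$-string through $\lambda$ as $1+\sum_i C_i$, then bound the $\alpha_1$- (and $\alpha_N$-) coordinate of the bottom weight of an arbitrary $\theta$-string by $\kappa_1=\kappa_N=\sum_i C_i$ from Lemma~\ref{size of diagram} to get both maximality and the constraint $m_1=m_N=0$. If anything, you are slightly more explicit than the paper in justifying the inequality $c_i\le\kappa_i$ (the paper simply asserts that a larger multiple of $\alpha_1$ ``cannot be subtracted'' and cites Lemma~\ref{size of diagram}), so your parenthetical about the dual module or $w_0$ is a welcome addition rather than an obstacle.
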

\begin{proof}\label{}
Let $\mathcal{S}_{\lambda}$ be the $\theta$-string through $\lambda$. Since $\lambda$ is the highest weight of $\mathcal{S}_{\lambda}$ the lowest weight of $\mathcal{S}_{\lambda}$ is $\lambda-\langle\lambda,\theta\rangle\theta=\lambda-\left(\sum_{i=1}^NC_i\right)\theta$, and thus $\mathcal{S}_{\lambda}$ has length $1+\sum_{i=1}^NC_i$. Let $\mathcal{S}_{\gamma}$ be the $\theta$-string through some other weight $\gamma$, where $\gamma$ is the highest weight of $\mathcal{S}_{\gamma}$ and $\gamma=\lambda-\sum_{j=1}^N\gamma_j\alpha_j$ for some non-negative integers $\gamma_j$.\\

Suppose $\mathcal{S}_{\gamma}$ has length greater than that of $\mathcal{S}_{\lambda}$, say $r+1+\sum_{i=1}^NC_i$ for some positive integer $r$. Then the lowest weight of $\mathcal{S}_{\gamma}$ is $\gamma-\left(r+\sum_{i=1}^NC_i\right)\theta=\lambda-\left(\sum_{j=1}^N\gamma_j\alpha_j\right)-\left(r+\sum_{i=1}^NC_i\right)\theta=\lambda-\sum_{j=1}^N\left(\gamma_j+r+\left(\sum_{i=1}^NC_i\right)\right)\alpha_j$. This contradicts Lemma \ref{size of diagram} since a multiple of $\alpha_1$ (or $\alpha_N$) greater than $\sum_{i=1}^NC_i$ cannot be subtracted from $\lambda$; i.e. no such weight exists. Thus $\mathcal{S}_{\gamma}$ cannot have a length greater than that of $\mathcal{S}_{\lambda}$, so $\mathcal{S}_{\lambda}$ has maximal length.\\

Suppose now that $\mathcal{S}_{\gamma}$ has the same (maximal) length as $\mathcal{S}_{\lambda}$. Then the lowest weight of $\mathcal{S}_{\gamma}$ is $\lambda-\sum_{j=1}^N\left(\gamma_j+\left(\sum_{i=1}^NC_i\right)\right)\alpha_j$.  This contradicts Lemma \ref{size of diagram} in the same way as above unless both $\gamma_1$ and $\gamma_N$ are zero.
\end{proof}
\begin{cor}\label{sl3 theta string}
	Irreducible $\mathfrak{sl}_3$-modules have exactly one maximal $\theta$-string.
\end{cor}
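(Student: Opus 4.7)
The plan is to derive the corollary directly from Proposition \ref{max theta string} by specializing to $N=2$. For $\mathfrak{sl}_3$ we have $N=2$, so the simple roots are $\alpha_1,\alpha_2$ and the highest root is $\theta=\alpha_1+\alpha_2$. By Proposition \ref{max theta string}, the $\theta$-string through $\lambda$ is a maximal $\theta$-string, so at least one such string exists.

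For the uniqueness half, I would invoke the second conclusion of Proposition \ref{max theta string}: the highest weight of any maximal $\theta$-string has the form $\lambda-\sum_{i=2}^{N-1}\gamma_i\alpha_i$ for non-negative integers $\gamma_i$. When $N=2$, the index set $\{i : 2\le i\le N-1\}=\{i: 2\le i\le 1\}$ is empty, so the sum is empty and the highest weight of any maximal $\theta$-string must equal $\lambda$ itself. Since the $\theta$-string is determined by (any of) its weights, there is only one maximal $\theta$-string, namely $\mathcal{S}_\lambda$.

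There is no real obstacle here; the corollary is essentially a corollary of the index-set becoming empty in the $\mathfrak{sl}_3$ case. The only thing to double-check is that distinct maximal $\theta$-strings must have distinct highest weights, which is immediate from the fact that a $\theta$-string is determined by its highest element (reflecting through the lowest-to-highest chain), so once we know the highest weight of a maximal $\theta$-string is forced to be $\lambda$, uniqueness of the string follows.
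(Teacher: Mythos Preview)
Your argument is correct and matches the paper's own proof: when $N=2$ the index set in the second conclusion of Proposition~\ref{max theta string} is empty, so the highest weight of any maximal $\theta$-string is forced to be $\lambda$, giving uniqueness. There is nothing to add.
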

\begin{proof}\label{}
For the $N=2$ case, $\lambda$ is the only weight meeting the criteria of Proposition \ref{max theta string} forcing the only maximal $\theta$-string to be the one through $\lambda$.
\end{proof}
\begin{cor}\label{length at least 3}
	In a non minuscule module the length of any maximal $\theta$-string is at least 3.
\end{cor}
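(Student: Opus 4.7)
The plan is to leverage Proposition \ref{max theta string} directly, which already tells us that the $\theta$-string through $\lambda$ is a maximal $\theta$-string and that its length equals $1+\sum_{i=1}^N C_i$. Since every maximal $\theta$-string has, by definition, the same (maximal) length, it suffices to bound this single quantity below by $3$, i.e., to show $\sum_{i=1}^N C_i \geq 2$ whenever $\lambda$ is non minuscule.

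The non minuscule hypothesis means that $\lambda \ne 0$ and $\lambda \ne \omega_k$ for every $k \in \{1,\dots,N\}$. In terms of the coordinates $\lambda = C_1\omega_1 + \dots + C_N\omega_N$, this excludes the tuple $(0,\dots,0)$ and every tuple of the form $(0,\dots,0,1,0,\dots,0)$ with a single $1$. I would then argue by cases: either some $C_i \geq 2$, in which case $\sum_j C_j \geq C_i \geq 2$, or else all $C_i \in \{0,1\}$, in which case at least two of them must be equal to $1$ (otherwise we would have either the zero tuple or a single $\omega_k$), giving again $\sum_j C_j \geq 2$.

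Combining these two observations, the $\theta$-string through $\lambda$ has length $1 + \sum_{i=1}^N C_i \geq 3$, and since it is a maximal $\theta$-string by Proposition \ref{max theta string}, every maximal $\theta$-string has length at least $3$.

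There is no real obstacle here; the corollary is essentially a bookkeeping consequence of the length formula already extracted in the proof of Proposition \ref{max theta string}, together with a direct unpacking of the definition of non minuscule. The only mildly subtle point is remembering that ``minuscule'' in this paper includes the trivial case $\lambda=0$, so that the excluded tuples form a properly larger set than just the $\omega_k$, which is what forces the case split in the paragraph above.
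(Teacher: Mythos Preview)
Your argument is correct and follows the same approach as the paper: both invoke the length formula $1+\sum_{i=1}^N C_i$ for the $\theta$-string through $\lambda$ extracted in Proposition~\ref{max theta string}, and then observe that the non minuscule hypothesis forces $\sum_{i=1}^N C_i\geq 2$. Your version simply spells out the case analysis for the last inequality more explicitly than the paper does.
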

\begin{proof}\label{}
As seen in the proof of Proposition \ref{max theta string}, the length of the $\theta$-string through $\lambda$ is $1+\langle\lambda,\theta\rangle=1+\sum_{i=1}^NC_i$. For a non minuscule module $\sum_{i=1}^NC_i\geq2$.
\end{proof}
\begin{remark}\label{labels}
Let $V(\lambda)$ be a non minuscule module and $N\geq 2$. The label assigned to $\lambda$, $(C_1,\dots,C_N)$, falls into at least one of the following three cases:
\begin{enumerate}
\item[(i)]
	$(C_1,\dots,C_{N-1})$ is a highest weight label for a non minuscule $\mathfrak{sl}_N$-module,
\item[(ii)]	
	$(C_2,\dots,C_N)$ is a highest weight label for a non minuscule $\mathfrak{sl}_N$-module, or
\item[(iii)]	
	$C_1=C_N=1$ and $C_2=\dots=C_{N-1}=0$.
\end{enumerate}
\end{remark}
Applying Lemma \ref{size of diagram} to case (iii) shows that $\lambda_{\ell}=\lambda-2\theta$ and hence it is the lowest weight for the maximal $\theta$-string through $\lambda$. Suppose $\mathcal{S}_{\gamma}$ is a maximal $\theta$-string through the weight $\gamma=\lambda-\sum_{i=1}^n\gamma_i\alpha_i$ for $\gamma_i\in\mathbb{Z}_{\geq0}$. Then $\mathcal{S}_{\gamma}$ has lowest weight $\gamma-2\theta=\lambda-\sum_{i=1}^n\gamma_i\alpha_i-2\theta$. This forces each $\gamma_i$ to be zero, and thus $\mathcal{S}_{\lambda}$ is the only maximal $\theta$-string.

Let $M$ be a nonzero submodule of $F^{\sigma}(\lambda)$. The goal for the remainder of this section is to show that if $F^{\sigma}(\lambda)$ is non minuscule then $v_{\lambda}(n)\in M$ for all $n\in\mathbb{Z}^{N+1}$ where $v_{\lambda}$ is a highest weight vector in $V(\lambda)$. Once $v_{\lambda}(n)$ is obtained it will be used in the next section to generate $V(\lambda)\otimes q^n$ for each $n$, thus generating all of $F^{\sigma}(\lambda)$.
\begin{prop}\label{highest weight component}
$M$ contains a vector $v(n)$, for some $n$, such that $v$ has a nonzero component in $V_{\lambda}$, the highest weight space of $V(\lambda)$.
\end{prop}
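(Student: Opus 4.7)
The plan is to take any nonzero homogeneous element $v(n)\in M$ (which exists because $M$ inherits the $\mathcal{H}$-gradation of $F^{\sigma}(\lambda)$) and then iteratively raise the weights appearing in the $V(\lambda)$-decomposition of $v$ until the highest weight $\lambda$ is reached. For any weight $\mu=\lambda-\sum_{i=1}^{N}a_i\alpha_i$ of $V(\lambda)$, define its height $h(\mu)=\sum_{i=1}^{N}a_i\ge 0$; then $h(\mu)=0$ exactly when $\mu=\lambda$. Decompose $v=\sum_\mu v_\mu$ into weight components and set $h^*=\min\{h(\mu):v_{\mu}\neq 0\}$, a measure of how far $v$ is from meeting $V_{\lambda}$. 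The objective is to drive $h^*$ down to $0$.

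The key step is to show that whenever $h^*>0$ one can produce a vector in $M$ of strictly smaller minimum height. Pick $\mu^*$ with $h(\mu^*)=h^*$ and $v_{\mu^*}\neq 0$. Since $\mu^*\neq\lambda$ and $\mathfrak{n}^+$ is generated by its simple root vectors, a standard argument (induction on the height of a positive root, using that $v_{\mu^*}$ is not a highest weight vector) gives some simple index $i$ with $E_{i,i+1}v_{\mu^*}\neq 0$. Now apply $D(e_{i+1},e_i)$, which lies in $\mathcal{D}_{\text{div}}$ because $(e_{i+1}|e_i)=0$; noting $e_ie_{i+1}^T=E_{i,i+1}$, the module action gives
\[
D(e_{i+1},e_i).v(n)=\bigl[(n_{i+1}+\sigma_{i+1})v+E_{i,i+1}v\bigr](n+e_i)\in M.
\]
Call the bracketed vector $w$. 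Its weight-$(\mu^*+\alpha_i)$ component is $(n_{i+1}+\sigma_{i+1})v_{\mu^*+\alpha_i}+E_{i,i+1}v_{\mu^*}$, and the first summand vanishes because $h(\mu^*+\alpha_i)=h^*-1<h^*$ forces $v_{\mu^*+\alpha_i}=0$ by minimality of $h^*$. Hence this component equals $E_{i,i+1}v_{\mu^*}\neq 0$, so $w$ has a nonzero weight component of height $h^*-1$, while no weight in $w$ can have height below $h^*-1$.

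Iterating the construction strictly decreases the minimum height at each step, so after at most $h^*$ applications one arrives at some $u(n')\in M$ with a nonzero weight-$\lambda$ component, as required. I do not anticipate a serious obstacle; the only delicate point is verifying that the scalar-multiple-of-$v$ term in the expression for $w$ cannot cancel the freshly created $E_{i,i+1}v_{\mu^*}$ contribution, and this is precisely what the minimality of $h^*$ guarantees.
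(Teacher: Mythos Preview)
Your proof is correct and follows essentially the same approach as the paper: starting from a homogeneous element of $M$, you repeatedly apply operators $D(e_{j},e_{i})$ with $i<j$ so that the resulting $E_{ij}$-term raises the weights appearing in $v$, iterating until the highest weight is hit. The only cosmetic differences are that the paper allows arbitrary positive root vectors $E_{ij}$ rather than just simple ones, and that you make termination explicit via the height function $h(\mu)$ whereas the paper leaves this implicit.
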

\begin{proof}\label{}
Let $v(m)\in M$ for some $m\in\mathbb{Z}^{N+1}$, and $v\in V(\lambda)$. Let $v=\sum_{\gamma\in\Lambda}v_{\gamma}$ be the decomposition of $v$ into weight vectors where $\Lambda$ is the set of weights of $V(\lambda)$. If $v_{\lambda}\neq0$ there is nothing to prove, so suppose $v_{\lambda}=0$. From the definition of the highest weight it follows that there is some $\mu$ in $\Lambda$ such that $E_{ij}v_{\mu}\in V_{\mu'}\neq0$ and $v_{\mu'}=0$ in the above decomposition, for some $i<j\in\{1,\dots,N+1\}$. Applying $D(e_j,e_i)$ to $v(m)$ yields $(e_j|m+\sigma)v(m+e_i)+E_{ij}v(m+e_i)=v'(m+e_i)$, where $v'$ has a nonzero component in $V_{\mu'}$. If $\mu'\neq\lambda$ this process may be repeated by choosing another $(i,j)$-pair in the same way, until the highest weight space has been reached.
\end{proof}
\begin{prop}\label{highest weight vector}
Let $V(\lambda)$ be non minuscule and suppose for some $n\in\mathbb{Z}^{N+1}$ that $v(n)\in M$ has nonzero component in $V_{\lambda}$. Then $v_{\lambda}(n)\in M$, where $v_{\lambda}$ is a highest weight vector of $V(\lambda)$.
\end{prop}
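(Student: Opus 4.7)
The plan is to apply iterated operators of the form $E_\alpha^{K_\alpha} F_\alpha^{K_\alpha}$, each produced by Proposition~\ref{actions}, to reduce $v(n)$ to a nonzero multiple of $v_\lambda(n)$. Write $v = v_\lambda + \sum_{\gamma < \lambda} v_\gamma$ and set $K = \langle\lambda,\theta\rangle = \sum_i C_i$, which is at least $2$ by Corollary~\ref{length at least 3}.

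First I would apply $Y := E_{1,N+1}^K E_{N+1,1}^K$ to $v$; two uses of Proposition~\ref{actions} give $(Yv)(n) \in M$. A standard $\mathfrak{sl}_2$-string computation for the triple $\{E_\theta, F_\theta, H_\theta\}$ shows $F_\theta^K v_\gamma = 0$ unless $v_\gamma$ lies at the top of a maximal $\theta$-string, and for such $v_\gamma$, $E_\theta^K F_\theta^K v_\gamma = (K!)^2\, v_\gamma$. Hence $Yv$ is supported on weight spaces $V_{\gamma'}$ with $\gamma'$ at the top of a maximal $\theta$-string, which by Proposition~\ref{max theta string} have the form $\gamma' = \lambda - \sum_{i=2}^{N-1}\gamma_i\alpha_i$; the coefficient of $v_\lambda$ in $Yv$ is $(K!)^2$ times the original one, hence nonzero.

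In case (iii) of Remark~\ref{labels}, $\lambda$ is the only such top, so $Yv$ is a nonzero multiple of $v_\lambda$ and we conclude $v_\lambda(n) \in M$. In cases (i) and (ii) additional tops $\gamma' \neq \lambda$ may appear; I would iterate by choosing, for each such $\gamma'$, a positive root $\alpha'$ with $\langle\lambda,\alpha'\rangle \geq 2$ for which $v_{\gamma'}$ does not sit at the top of a maximal $\alpha'$-string, so that $E_{\alpha'}^{K_{\alpha'}} F_{\alpha'}^{K_{\alpha'}}$ annihilates $v_{\gamma'}$ while multiplying $v_\lambda$ by a nonzero scalar. In case (i) the root $\alpha_1 + \dots + \alpha_{N-1}$ is the natural candidate because $(C_1,\dots,C_{N-1})$ is non-minuscule as an $\mathfrak{sl}_N$-label; case (ii) is symmetric.

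The hardest part will be verifying that suitable separating roots $\alpha'$ are always available for every remaining top $\gamma' \neq \lambda$ and that the resulting iteration terminates. The structural form of the tops given by Proposition~\ref{max theta string}, combined with the non-minuscule hypothesis on the subsystems appearing in Remark~\ref{labels}, should furnish the needed flexibility, but the case-by-case bookkeeping across (i) and (ii) is the main technical obstacle.
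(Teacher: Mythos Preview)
Your first move---applying $E_{1,N+1}^{K}E_{N+1,1}^{K}$ via Proposition~\ref{actions} to strip $v$ down to components supported on tops of maximal $\theta$-strings---is exactly what the paper does, and your $\mathfrak{sl}_2$ computation giving the nonzero scalar $(K!)^2$ is correct.

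The part you flag as ``hardest'' is where your sketch and the paper diverge. You propose to iterate with ad~hoc roots $\alpha'$ and acknowledge that verifying existence of separating roots and termination is the main obstacle. The paper dissolves this obstacle by organizing the iteration as an \emph{induction on $N$}: after the $\theta$-step, every surviving weight has the form $\lambda-\sum_{i=2}^{N-1}\gamma_i\alpha_i$ and hence lies in both $V(\lambda)_1$ and $V(\lambda)_N$, the irreducible $\mathfrak{sl}_N$-submodules spanned by weight spaces with weights $\lambda-\sum_{i\le N-1}a_i\alpha_i$ and $\lambda-\sum_{i\ge 2}b_i\alpha_i$ respectively. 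By Remark~\ref{labels} at least one of these is non-minuscule (or we are in case~(iii), already done), so the inductive hypothesis applied inside that $\mathfrak{sl}_N$-module yields $v_\lambda(n)\in M$ directly. Termination is the base case $N=2$, handled by Corollary~\ref{sl3 theta string}; the ``right'' root at each stage is simply the highest root of the current sub-$A$-diagram.

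There is also a subtlety your formulation hides. For a root $\alpha'\neq\theta$ the $\alpha'$-string through $\lambda$ need \emph{not} be maximal in $V(\lambda)$, so your claim that $E_{\alpha'}^{K_{\alpha'}}F_{\alpha'}^{K_{\alpha'}}$ annihilates every $v_{\gamma'}$ not at the top of a maximal $\alpha'$-string is not justified as stated. What rescues it is precisely the passage to the submodule: within $V(\lambda)_1$ the root $\alpha_1+\dots+\alpha_{N-1}$ \emph{is} the highest root, the operators $E_{1,N}$ and $E_{N,1}$ preserve $V(\lambda)_1$, and Proposition~\ref{max theta string} applies there. So the paper's inductive framework is not merely organizational---it is what makes Proposition~\ref{max theta string} available at every stage.
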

\begin{proof}\label{}
Suppose for some $v(n)\in M$ that $v$ has a nonzero component $v_{\lambda}$ which lies in the highest weight space of $V(\lambda)$. Such a vector exists by Proposition \ref{highest weight component}. Then $v=\sum_{\gamma\in\Lambda}v_{\gamma}$, where $\Lambda$ is a set of weights of $V(\lambda)$ and $v_{\lambda}\neq0$. By Proposition \ref{max theta string} there is a maximal $\theta$-string of length $\langle\lambda,\theta\rangle+1\geq3$ which has highest weight $\lambda$. Note that $E_{N+1,1}$ maps $V_{\mu}$ into $V_{\mu-\theta}$ for any weight $\mu$ of $V(\lambda)$. The goal in what follows is annihilate those components different from $v_{\lambda}$. This will be accomplished by moving $v$ back and forth along the $\theta$-strings.

In the case $N=1$, an irreducible $\mathfrak{sl}_2$-module is a single $\theta$-string. For $N=2$, Corollary \ref{sl3 theta string} says that irreducible $\mathfrak{sl}_3$-modules have only one maximal $\theta$-string. In both cases these strings necessarily have highest weight $v_{\lambda}$. By Proposition \ref{actions}, $E_{N+1,1}^{\langle\lambda,\theta\rangle}v(n)\in M$ where $E_{N+1,1}^{\langle\lambda,\theta\rangle}v\in V_{\lambda-\langle\lambda,\theta\rangle\theta}$. All $v_{\gamma}\neq v_{\lambda}$ are annihilated because their weights either do not lie in the unique maximal $\theta$-string, or if they do, they are of the form $\lambda-i\theta$ for $i\geq1$. In either case $E_{N+1,1}^{\langle\lambda,\theta\rangle}\left(v_{\gamma}\right)=0$, for $\gamma\neq\lambda$, due to maximal length. Applying $E_{1,N+1}^{\langle\lambda,\theta\rangle}$ to $E_{N+1,1}^{\langle\lambda,\theta\rangle}v(n)$ then yields a scalar multiple of $v_{\lambda}(n)$. Now proceed by induction on $N$ with base case $N=2$.

Assume now $N\geq3$ and $v(n)\in M$ where $v=\sum_{\gamma\in\Lambda}v_{\gamma}$. Applying Proposition \ref{actions} yields $E_{N+1,1}^{\langle\lambda,\theta\rangle}v(n)=\sum_{\mu\in\Lambda_*}w_{\mu}(n)\in M$ where each $\mu\in\Lambda_*$ is given by $\mu=\gamma-\langle\lambda,\theta\rangle\theta$ for some $\gamma\in\Lambda$. By maximality of the length of a maximal $\theta$-string, any $\mu\in\Lambda_*$ is a lowest weight in the $\theta$-string which it belongs; i.e. the only $\gamma\in\Lambda$ for which $E_{N+1,1}^{\langle\lambda,\theta\rangle}v_{\gamma}\neq0$ are those which are the highest weight of their maximal $\theta$-string. In particular $\lambda-\langle\lambda,\theta\rangle\theta\in\Lambda_*$. Then $v'(n)=E_{1,N+1}^{\langle\lambda,\theta\rangle}\left(\sum_{\mu\in\Lambda_*}w_{\mu}(n)\right)=\sum_{\gamma\in\Lambda^*}v'_{\gamma}(n)\in M$ where $\lambda\in\Lambda^*$ with $v'_{\lambda}\neq0$ and $\Lambda^*\subset\Lambda$ are weights which occur as the highest weight in their respective maximal $\theta$-string.

Let $\Lambda_1$ be the set of all weights in $V(\lambda)$ of the form $\lambda-\sum_{i=1}^{N-1}a_i\alpha_i$, and $\Lambda_N$ be the set of all weights in $V(\lambda)$ of the form $\lambda-\sum_{i=2}^{N}b_i\alpha_i$,  where $a_i,b_i\in\mathbb{Z}_{\geq 0}$. Let $V(\lambda)_1$ and $V(\lambda)_N$ be the span of all weight vectors with weights in $\Lambda_1$ and $\Lambda_N$ respectively. Then $V(\lambda)_1$ is an $\mathfrak{sl}_N$-module, for the copy of $\mathfrak{sl}_N$ generated by $\{E_{ij}|1\leq i,j\leq N,i\neq j\}\subset\mathfrak{sl}_{N+1}$, with highest weight label $(C_1,\dots,C_{N-1})$. Similarly $V(\lambda)_N$ is an $\mathfrak{sl}_N$-module, for the copy of $\mathfrak{sl}_N$ generated by $\{E_{ij}|2\leq i,j\leq N+1,i\neq j\}\subset\mathfrak{sl}_{N+1}$, and has highest weight label $(C_2,\dots,C_{N})$. Modules $V(\lambda)_1$ and $V(\lambda)_N$ are irreducible since any critical vector they contain is also a critical vector for $\mathfrak{sl}_{N+1}$ in $V(\lambda)$ which is irreducible by assumption.

By Proposition \ref{max theta string} $\Lambda^*$ is a subset of both $\Lambda_1$ and $\Lambda_N$. By remark \ref{labels}, either $V(\Lambda)_1$ or $V(\lambda)_N$ is non minuscule, or there is only one maximal $\theta$-string. In the latter $\Lambda^*$ is empty and so $v'(n)$ is a scalar multiple of $v_{\lambda}(n)$. Otherwise $v'$ lies in a non minuscule $\mathfrak{sl}_N$-module $W$ (either $V(\Lambda)_1$ or $V(\lambda)_N$) and has nonzero component in $V_{\lambda}$. By induction $v_{\lambda}(n)\in W$ and thus $v_{\lambda}(n)\in M$.
\end{proof}
\begin{lem}\label{first component}
If $v_{\lambda}(n)\in M$ then $v_{\lambda}(n+re_1)\in M$ for any $r\in\mathbb{Z}$.
\end{lem}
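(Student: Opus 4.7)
The plan is to read off the coefficient of $v_\lambda(n+re_1)$ in the action of simple divergence-zero elements on $v_\lambda(n)$. Since $(e_j|re_1)=0$ whenever $j>1$, the element $D(e_j,re_1)$ lies in $\mathcal{D}_{\text{div}}$, and its module action is
\[D(e_j,re_1).v_\lambda(n) = (n_j+\sigma_j)\,v_\lambda(n+re_1) + r\,E_{1j}v_\lambda(n+re_1).\]
The second term vanishes because $v_\lambda$ is a highest weight vector and $E_{1j}$ is a raising operator for every $j>1$. So whenever $n_j+\sigma_j\neq 0$ for \emph{some} $j>1$, we can conclude $v_\lambda(n+re_1)\in M$ immediately.

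The remaining, harder, case is the "bad position" $n_j+\sigma_j=0$ for every $j>1$, in which the direct application collapses to zero. My plan to handle it is to detour through a nearby position where the generic argument applies. Applying $D(e_1,e_j)\in\mathcal{D}_{\text{div}}$ to $v_\lambda(n)$ gives
\[D(e_1,e_j).v_\lambda(n) = (n_1+\sigma_1)\,v_\lambda(n+e_j) + (E_{j1}v_\lambda)(n+e_j),\]
a vector whose $V_\lambda$-component is $(n_1+\sigma_1)v_\lambda$; Proposition~\ref{highest weight vector} then extracts $v_\lambda(n+e_j)\in M$ provided $n_1+\sigma_1\neq 0$. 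At this new position the $j$-th coordinate of $n+e_j+\sigma$ equals $1$, so the generic step yields $v_\lambda(n+e_j+re_1)\in M$. Applying $D(e_1,-e_j)$ to this vector shifts back by $-e_j$ and produces a vector whose $V_\lambda$-component has coefficient $n_1+\sigma_1+r$; one final invocation of Proposition~\ref{highest weight vector} gives $v_\lambda(n+re_1)\in M$.

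The main obstacle is the doubly-degenerate case $n+\sigma=0$, where the detour above still produces a zero $V_\lambda$-component. Here I would exploit the non-triviality of $\lambda$: because $\lambda\neq 0$ the $\mathfrak{gl}$-weight coordinates $\lambda_i$ are not all equal, so there exist $u,s\in\mathbb{C}^{N+1}$ with $(u|s)=0$ and $\sum_i s_iu_i\lambda_i\neq 0$; since the action at $n$ reduces to $(su^T)v_\lambda(n+s)$, the highest weight contribution $\sum_i s_iu_i\lambda_i$ is now nonzero, letting us again escape to a generic position before routing through the argument of the previous paragraph. The delicate part is choosing the auxiliary shifts so that the final application of Proposition~\ref{highest weight vector} has a nonzero coefficient for every integer $r$; a finite number of unlucky values of $r$ can always be bypassed by replacing $r$ with $r=s_1+s_2$ and performing the construction in two steps.
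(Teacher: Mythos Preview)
Your generic case is fine, and the detour in the ``bad case'' (all $n_j+\sigma_j=0$ for $j>1$) is a reasonable idea, but there is a genuine gap. After returning via $D(e_1,-e_j)$, the $V_\lambda$-coefficient is $n_1+\sigma_1+r$, and you never handle the single value $r=r^\ast:=-(n_1+\sigma_1)$ (which occurs precisely when $\sigma\in\mathbb{Z}^{N+1}$). For this $r$ the \emph{target} satisfies $(n+re_1)+\sigma=0$, and your two-step bypass cannot reach it: whatever intermediate $n+s_1e_1$ you pass through still lies on the same $e_1$-line with $(n+s_1e_1)_j+\sigma_j=0$ for $j>1$, so the final return step again has coefficient $(n+re_1)_1+\sigma_1=0$. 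More generally, any last move $D(u,r')$ landing at a position $m$ with $m+\sigma=0$ has scalar part $(u\mid m-r'+\sigma)=-(u\mid r')=0$, so a route built solely from ``shift $+$ Proposition~\ref{highest weight vector}'' cannot produce a nonzero $V_\lambda$-coefficient there. Your diagonal-action idea $\sum_i r_iu_i\lambda_i\neq 0$ is exactly what is needed to reach such a degenerate position, but you invoke it only for a degenerate \emph{starting} point, not a degenerate \emph{target}; as written the bad case is incomplete.

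The paper's proof takes a quite different route that sidesteps this difficulty entirely. It works only with the single index $N+1$: if $(e_{N+1}\mid n+\sigma)\neq 0$ one is done directly, and if $(e_{N+1}\mid n+\sigma)=0$ the paper uses Proposition~\ref{actions} to obtain $E_{N+1,1}^2v_\lambda(n)\in M$ (nonzero by Corollary~\ref{length at least 3}), then applies $D(e_{N+1},-re_1)$ and $D(e_{N+1},2re_1)$ in succession. In each of these two moves the scalar part vanishes but the matrix part $E_{1,N+1}$ steps \emph{up} the $\theta$-string, yielding first $E_{N+1,1}v_\lambda(n-re_1)$ and then $v_\lambda(n+re_1)$ on the nose, with no appeal to Proposition~\ref{highest weight vector}. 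This use of the $\theta$-string (length $\ge 3$) is the key device your argument lacks.
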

\begin{proof}
Let $v_{\lambda}(n)\in M$ for some $n\in\mathbb{Z}^{N+1}$. This result is clearly true for $r=0$, so suppose $r\neq 0$. Then
\[D(e_{N+1},re_1).v_{\lambda}(n)=(e_{N+1}|n+\sigma)v_{\lambda}(n+re_1)+rE_{1,N+1}v_{\lambda}(n+re_1),\]
where the second term is zero as $v_{\lambda}$ is a highest weight vector, and if $(e_{N+1}|n+\sigma)\neq0$ then $v_{\lambda}(n+re_1)\in M$. Suppose $(e_{N+1}|n+\sigma)=0$. By proposition \ref{actions} $E_{N+1,1}^2v_{\lambda}(n)\in M$. This is a weight vector whose weight lies on the maximal $\theta$-string through $\lambda$ and is nonzero since the string has length at least 3. So
\begin{multline*}
D(e_{N+1},-re_1).E_{N+1,1}^2v_{\lambda}(n)\\
=(e_{N+1}|n+\sigma)E_{N+1,1}^2v_{\lambda}(n-re_1)-rE_{1,N+1}E_{N+1,1}^2v_{\lambda}(n-re_1).
\end{multline*}
The first term has coefficient zero and the second is a scalar multiple of $E_{N+1,1}v_{\lambda}(n-re_1)$. Then
\begin{multline*}
D(e_{N+1},2re_1).E_{N+1,1}v_{\lambda}(n-re_1)\\
=(e_{N+1}|n-re_1+\sigma)E_{N+1,1}v_{\lambda}(n+re_1)+2rE_{1,N+1}E_{N+1,1}v_{\lambda}(n+re_1)
\end{multline*}
Again the first term is zero and the second term is a nonzero scalar multiple of $v_{\lambda}(n+re_1)$. Thus $v_{\lambda}(n+re_1)\in M$ for all $r\in\mathbb{Z}$.
\end{proof}
\begin{prop}\label{all highest weight vectors}
$v_{\lambda}(m)\in M$ for any $m\in\mathbb{Z}^{N+1}$.
\end{prop}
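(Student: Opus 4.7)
The plan is to extend Lemma~\ref{first component} into every coordinate direction. Only the $e_{N+1}$-direction genuinely requires new work, via a subtraction trick that kills the lowering-operator contributions; the remaining $e_i$-shifts for $i\in\{2,\ldots,N\}$ follow at once.

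For the $e_{N+1}$-shift, fix a nonzero integer $s$ and take the two divergence-zero elements
\[
D\bigl(e_1+e_{N+1},\,s(e_1-e_{N+1})\bigr)\quad\text{and}\quad D(e_1,\,-se_{N+1}).
\]
Apply the first to $v_\lambda(m)\in M$ and the second to $v_\lambda(m+se_1)\in M$, the latter belonging to $M$ by Lemma~\ref{first component}; both outputs live at the common position $m+se_1-se_{N+1}$. Using $E_{1,N+1}v_\lambda=0$, a direct expansion shows each output has the form $[\,\text{scalar}\,]\,v_\lambda - s(E_{N+1,1}v_\lambda)$ at that position, so the difference kills the $E_{N+1,1}v_\lambda$-part and leaves
\[
\bigl[(m_{N+1}+\sigma_{N+1})+s\bigl(\langle\lambda,\theta\rangle-1\bigr)\bigr]\,v_\lambda(m+se_1-se_{N+1})\in M.
\]
Because $V(\lambda)$ is non minuscule, Corollary~\ref{length at least 3} gives $\langle\lambda,\theta\rangle\geq 2$, hence the scalar coefficient is nonzero for every integer $s$ save at most one. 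For each such $s$, Lemma~\ref{first component} lets us translate the $e_1$-coordinate back by $-s$, producing $v_\lambda(m-se_{N+1})\in M$. The possible exceptional value of $s$ is recovered by a two-step shift through an intermediate position, since the exceptional value moves when $m_{N+1}$ is changed. This yields $v_\lambda(m+re_{N+1})\in M$ for every $r\in\mathbb{Z}$.

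For $i\in\{2,\ldots,N\}$, choose an integer $s$ with $m_{N+1}+s+\sigma_{N+1}\neq 0$ (at most one integer is excluded). By the previous step $v_\lambda(m+se_{N+1})\in M$, and applying $D(e_{N+1},re_i)\in\mathcal{D}_{\text{div}}$ together with $E_{i,N+1}v_\lambda=0$ yields
\[
(m_{N+1}+s+\sigma_{N+1})\,v_\lambda(m+se_{N+1}+re_i)\in M,
\]
so $v_\lambda(m+se_{N+1}+re_i)\in M$, and shifting by $-s$ in the $e_{N+1}$-direction gives $v_\lambda(m+re_i)\in M$. Combined with Lemma~\ref{first component} and the $e_{N+1}$-shift, this produces $v_\lambda(m)\in M$ for every $m\in\mathbb{Z}^{N+1}$.

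The main obstacle lies in the $e_{N+1}$-direction: the pair of divergence-zero elements must be tuned precisely so that the $E_{N+1,1}v_\lambda$-contributions cancel, and the non minuscule hypothesis is essential to force $\langle\lambda,\theta\rangle-1\geq 1$ so that the surviving scalar genuinely depends on $s$ and isolates $v_\lambda$ at the target position.
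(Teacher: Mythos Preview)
Your argument is correct. One small omission: you never invoke Propositions~\ref{highest weight component} and~\ref{highest weight vector} to produce a starting $v_\lambda(n)\in M$; as written, you are proving that the set $\{m:v_\lambda(m)\in M\}$ is all of $\mathbb{Z}^{N+1}$ once it is nonempty, and that nonemptiness needs to be cited.

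Your route differs from the paper's at the key step of moving the $e_{N+1}$-coordinate. The paper simply applies $D(e_1,r_{N+1}e_{N+1})$ to $v_\lambda(n)$ (after using Lemma~\ref{first component} to arrange $(e_1|n+\sigma)\neq0$), obtaining a vector at $n+r_{N+1}e_{N+1}$ with nonzero $V_\lambda$-component but polluted by an $E_{N+1,1}v_\lambda$ term; rather than cancel that term, the paper re-invokes Proposition~\ref{highest weight vector} to extract $v_\lambda(n+r_{N+1}e_{N+1})$ from it. Your subtraction of two $\mathcal{D}_{\text{div}}$-actions is more hands-on: it avoids a second pass through the nontrivial Proposition~\ref{highest weight vector}, at the cost of computing the Cartan eigenvalue $\langle\lambda,\theta\rangle$ and explicitly using $\langle\lambda,\theta\rangle\geq 2$ to make the surviving scalar nonconstant in $s$. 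The paper's approach buys conceptual economy (the cleanup is already packaged); yours buys a more self-contained argument at this point. After the $e_{N+1}$-shift, both proofs finish the remaining coordinates by the same idea---apply $D(e_{N+1},r)$ and use $E_{i,N+1}v_\lambda=0$---though the paper moves all coordinates $1,\ldots,N$ simultaneously with a single $r=\sum_{i=1}^N r_ie_i$, while you treat them one at a time.
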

\begin{proof}
By Propositions \ref{highest weight component} and \ref{highest weight vector} there exists of $v_{\lambda}(n)\in M$ for some $n\in\mathbb{Z}^{N+1}$. Using Lemma \ref{first component} assume that $(e_{1}|n+\sigma)\neq0$. Then for any $r_{N+1}\in\mathbb{Z}$
\begin{multline*}
D(e_1,r_{N+1}e_{N+1}).v_{\lambda}(n)\\
= (e_{1}|n+\sigma)v_{\lambda}(n+r_{N+1}e_{N+1})+r_{N+1}E_{N+1,1}v_{\lambda}(n+r_{N+1}e_{N+1}).
\end{multline*}
Since the first term above is nonzero it follows from Proposition \ref{highest weight vector} that $v_{\lambda}(n+r_{N+1}e_{N+1})\in M$. Thus $v_{\lambda}(n+r_{N+1}e_{N+1})\in M$ for all $r_{N+1}\in\mathbb{Z}$. Let $r=\sum_{i=1}^Nr_ie_i\in\mathbb{Z}^{N+1}$. Then
\begin{multline*}
D(e_{N+1},r).v_{\lambda}(n+r_{N+1}e_{N+1})\\=(e_{N+1}|n+r_{N+1}e_{N+1}+\sigma)v_{\lambda}(n+r+r_{N+1}e_{N+1})
+ \sum_{i=1}^Nr_iE_{i,N+1}v_{\lambda}(n+r+r_{N+1}e_{N+1}),
\end{multline*}
with the second term being zero by the highest weight property of $v_{\lambda}$. When $(e_{N+1}|n+r_{N+1}e_{N+1}+\sigma)\neq0$ it follows by the arbitrary choice of $r$ that $v_{\lambda}(n+r+r_{N+1}e_{N+1})\in M$ for all $r\in\mathbb{Z}^{N}\times\{0\}$.

Suppose there exists $r_{N+1}\in\mathbb{Z}$ such that $(e_{N+1}|n+r_{N+1}e_{N+1}+\sigma)=0$, say when $r_{N+1}=r_{N+1}^*\in\mathbb{Z}$. For any $r=\sum_{i=1}^Nr_ie_i\in\mathbb{Z}^{N+1}$ and $\bar{r}_{N+1}\neq r_{N+1}^*$, the above shows that $v_{\lambda}(n+r+\bar{r}_{N+1})\in M$. By Lemma \ref{first component} $v_{\lambda}(n+ae_1+\sum_{i=2}^Nr_ie_i+\bar{r}_{N+1}e_{N+1})\in M$ where $(e_1|n+ae_1+\sum_{i=2}^Nr_ie_i+\bar{r}_{N+1}e_{N+1}+\sigma)\neq0$. Then
\begin{multline*}
D(e_1,(r_{N+1}^*-\bar{r}_{N+1})e_{N+1}).v_{\lambda}(n+ae_1+\sum_{i=2}^Nr_ie_i+\bar{r}_{N+1}e_{N+1})
\\= (e_1|n+ae_1+\sum_{i=2}^Nr_ie_i+\bar{r}_{N+1}e_{N+1}+\sigma)v_{\lambda}(n+ae_1+\sum_{i=2}^Nr_ie_i+r_{N+1}^*e_{N+1})
\end{multline*}
and so $v_{\lambda}(n+ae_1+\sum_{i=2}^Nr_ie_i+r_{N+1}^*e_{N+1})\in M$. Apply Lemma \ref{first component} again to get that $v_{\lambda}(n+r+r_{N+1}^*e_{N+1})\in M$. This along with the previous paragraph shows that $v_{\lambda}(m)\in M$ for any $m\in\mathbb{Z}^{N+1}$.
\end{proof}
\section{Generating the Module}
\begin{thm}\label{non minuscule lemma}
If $F^{\sigma}(\lambda)$ is non minuscule then it is irreducible as a $\mathcal{D}_{\text{div}}$-module.
\end{thm}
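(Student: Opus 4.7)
The plan is to leverage Proposition \ref{all highest weight vectors} (which provides $v_{\lambda}(n)\in M$ for every $n\in\mathbb{Z}^{N+1}$) and show that from this global supply of highest weight vectors one can generate every homogeneous layer $V(\lambda)\otimes q^n$ of $F^{\sigma}(\lambda)$. Since $V(\lambda)$ is a simple $\mathfrak{sl}_{N+1}$-module generated by $v_{\lambda}$, it suffices to produce inside $M$ every vector of the form $E_{i_1j_1}\cdots E_{i_kj_k}v_{\lambda}(n)$ at the fixed grade $n$.

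The key bootstrap is the identity, valid for any $i\neq j$, any $m\in\mathbb{Z}^{N+1}$, and any $r\in\mathbb{Z}\setminus\{0\}$ (noting $(e_j|re_i)=0$ so $D(e_j,re_i)\in\mathcal{D}_{\text{div}}$):
\[
D(e_j,re_i).v_{\lambda}(m)=(e_j|m+\sigma)\,v_{\lambda}(m+re_i)+r\,E_{ij}v_{\lambda}(m+re_i).
\]
The left-hand side lies in $M$, and by Proposition \ref{all highest weight vectors} so does the first summand on the right. Subtracting and dividing by $r$ gives $E_{ij}v_{\lambda}(m+re_i)\in M$. Letting $m$ and $r\neq0$ vary, every grade is hit, so $E_{ij}v_{\lambda}(n)\in M$ for all $i\neq j$ and all $n\in\mathbb{Z}^{N+1}$.

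The same computation propagates: if $w\in V(\lambda)$ has the property that $w(n)\in M$ for every $n$, then applying $D(e_j,re_i)$ to $w(m)$ and subtracting the $w(m+re_i)$-term shows $E_{ij}w(n)\in M$ for every $n$. By induction on word length, $E_{i_1j_1}\cdots E_{i_kj_k}v_{\lambda}(n)\in M$ for every $n$ and every string of indices with $i_\ell\neq j_\ell$. Because $V(\lambda)$ is irreducible as an $\mathfrak{sl}_{N+1}$-module and $v_{\lambda}$ generates it under $U(\mathfrak{sl}_{N+1})$, these vectors span $V(\lambda)\otimes q^n$. Hence $V(\lambda)\otimes q^n\subseteq M$ for every $n$, giving $M=F^{\sigma}(\lambda)$.

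I do not anticipate a serious obstacle here: the non minuscule hypothesis was absorbed entirely into Proposition \ref{all highest weight vectors}, and the remaining argument is a clean bootstrap that exploits the fact that $D(e_j,re_i)$ contributes only a single power of $E_{ij}$ (in contrast to Proposition \ref{actions}, which yields only $E_{ij}^k$ for $k\geq2$). The only minor point to check is that the coefficient $(e_j|m+\sigma)$ causes no trouble, but as noted above it simply multiplies a term already known to be in $M$, so it can be freely subtracted off independently of whether it vanishes.
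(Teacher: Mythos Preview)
Your proof is correct and follows essentially the same approach as the paper: both use Proposition~\ref{all highest weight vectors} as the base case and then bootstrap by applying $D(e_j,re_i)$ to extract single $E_{ij}$-actions, inducting on word length to reach all of $V(\lambda)\otimes q^n$. The only cosmetic difference is that the paper restricts to negative root vectors $E_{ij}$ with $i>j$ (appealing to the standard highest-weight spanning set), whereas you allow all $i\neq j$ and invoke irreducibility of $V(\lambda)$ directly; both are equally valid.
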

\begin{proof}
From the theory of highest weight modules for semisimple Lie algebras, $V(\lambda)$ is spanned by $v_{\lambda}$ and the vectors $y_1\dots y_kv_{\lambda}$, where, for each $p\in\{1,\dots,k\}$, $y_p=E_{ij}$ for some $(i,j)$-pair with $i>j$. Let $M$ be a nonzero submodule of $F^{\sigma}(\lambda)$. Proceed by induction on $k$ to show that $y_1\dots y_kv_{\lambda}(m)\in M$ for any $m\in\mathbb{Z}^{N+1}$. By Proposition \ref{all highest weight vectors}, $v_{\lambda}(m)\in M$ for any $m\in\mathbb{Z}^{N+1}$, which is the basis for the induction.

Suppose for some $k\geq0$ that $y_1\dots y_kv_{\lambda}(m)\in M$ for any $m\in\mathbb{Z}^{N+1}$ and any $y_1,\dots,y_k$ as above. Then for any $i,j\in\{1,\dots,N+1\}$ with $i>j$
\[D(e_j,e_i).y_1\dots y_kv_{\lambda}(m)
=(e_j|m+\sigma)y_1\dots y_kv_{\lambda}(m+e_i)+E_{ij}y_1\dots y_kv_{\lambda}(m+e_i).\]
Since $y_1\dots y_kv_{\lambda}(m+e_i)\in M$ by the induction hypothesis, $E_{ij}y_1\dots y_kv_{\lambda}(m+e_i)\in M$. Because $m$ is arbitrary, it follows that $E_{ij}y_1\dots y_kv_{\lambda}(n)\in M$ for any $n\in\mathbb{Z}^{N+1}$. Thus $V(\lambda)(m)\subseteq M$ for all $m\in\mathbb{Z}^{N+1}$ and hence $M=F^{\sigma}(\lambda)$.
\end{proof}
The $k$-fold wedge product $\bigwedge^k(\mathbb{C}^{N+1})$ is a highest weight $\mathfrak{sl}_{N+1}$-module via the action
\[X(v_1\wedge\dots\wedge v_k)=\sum_{p=1}^kv_1\wedge\dots\wedge Xv_p\wedge\dots\wedge v_k\]
for any $X\in\mathfrak{sl}_{N+1}  $ and $v_1,\dots,v_k\in\mathbb{C}^{N+1}$. $\bigwedge^k(\mathbb{C}^{N+1})$ has highest weight vector $e_1\wedge\dots\wedge e_k$. It follows from $(E_{ii}-E_{i+1,i+1})(e_1\wedge\dots\wedge e_k)=\delta_{ik}$ that $\bigwedge^k(\mathbb{C}^{N+1})$ has highest weight $\omega_k$ and so $\bigwedge^k(\mathbb{C}^{N+1})\cong V(\omega_k)$ for $1\leq k\leq N$, and $\bigwedge^0(\mathbb{C}^{N+1})\cong\bigwedge^{N+1}(\mathbb{C}^{N+1})\cong V(0)$. For convenience set $\omega_0=\omega_{N+1}=0$ so that $V(0)=V(\omega_0)=V(\omega_{N+1})$.

In \cite{R} some $\mathcal{D}$-submodules of $\bigwedge^k(\mathbb{C}^{N+1})\otimes \mathbb{C}[q_1^{\pm1},\dots,q_{N+1}^{\pm1}]$ are identified. As $\mathcal{D}_{\text{div}}$-modules they are
\[W_k^{\sigma}=\bigoplus_{n\in\mathbb{Z}^{N+1}}\left(\mathbb{C}(n+\sigma)\wedge\mathbb{C}^{N+1}\wedge\dots\wedge\mathbb{C}^{N+1}\right)\otimes q^n,\]
and if $\sigma\in\mathbb{Z}^{N+1}$ there is a larger submodule
\[\tilde{W}_k^{\sigma}=W_k^{\sigma}\oplus\left(\mathbb{C}^{N+1}\wedge\dots\wedge\mathbb{C}^{N+1}\right)\otimes q^{-\sigma}.\]
Let $\tilde{W}_k^{\sigma}=W_k^{\sigma}$ in the case $\sigma\not\in\mathbb{Z}^{N+1}$. For $k=0$ this $\mathcal{D}_{\text{div}}$-module is simply $\mathbb{C}[q_1^{\pm1},\dots,q_{N+1}^{\pm1}]$, and will be denoted $F^{\sigma}(\omega_0)$. The module action reduces to
\[D(u,r).q^n=(u|n+\sigma)q^{n+r}.\]
 If $\sigma\in\mathbb{Z}^{N+1}$ then $F^{\sigma}(\omega_0)$ has submodule $\tilde{W}_0^{\sigma}=\mathbb{C} q^{-\sigma}$.
\begin{lem}\label{wedge weight vector}
Let $N\geq 2$ and $1\leq k\leq N-1$. If $M$ is a submodule of $\bigwedge^k(\mathbb{C}^{N+1})\otimes \mathbb{C}[q_1^{\pm1},\dots,q_{N+1}^{\pm1}]$ which properly contains $\tilde{W}_k^{\sigma}$, then $M$ contains an element of the form $u(n)$, where $u$ is a weight vector	in $\bigwedge^k(\mathbb{C}^{N+1})$, and $n+\sigma\neq 0$.
\end{lem}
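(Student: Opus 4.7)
My plan is to produce a homogeneous element of $M$ outside $\tilde{W}_k^{\sigma}$, confirm that its $q$-degree $n$ satisfies $n+\sigma\neq 0$, and then simplify it using the $\mathcal{D}_{\text{div}}$-action into a single weight vector. As observed before Proposition~\ref{actions}, $\mathcal{H}$ acts on $F^{\sigma}(\omega_k)$ with eigenspaces $\bigwedge^k(\mathbb{C}^{N+1})\otimes q^n$, so any submodule inherits this $\mathbb{Z}^{N+1}$-gradation and $M$ therefore contains a nonzero homogeneous element $v(n)\notin\tilde{W}_k^{\sigma}$.

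Next I would check that $n+\sigma\neq 0$. If $\sigma\notin\mathbb{Z}^{N+1}$ this is automatic. If $\sigma\in\mathbb{Z}^{N+1}$, then by construction $\tilde{W}_k^{\sigma}$ contains all of $\bigwedge^k(\mathbb{C}^{N+1})\otimes q^{-\sigma}$, so $n=-\sigma$ would force $v(n)\in\tilde{W}_k^{\sigma}$, a contradiction. Thus $n+\sigma\neq 0$ in either case, and since $v(n)\notin W_k^{\sigma}$ we have $v\notin(n+\sigma)\wedge\bigwedge^{k-1}(\mathbb{C}^{N+1})$; by exactness of the Koszul complex (which applies since $n+\sigma\neq 0$), this is equivalent to the non-vanishing $(n+\sigma)\wedge v\neq 0$ in $\bigwedge^{k+1}(\mathbb{C}^{N+1})$. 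The range restriction $k\leq N-1$ is precisely what makes $\bigwedge^{k+1}(\mathbb{C}^{N+1})$ nontrivial so that this reformulation is meaningful.

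Finally, to reduce $v=\sum_I c_I e_I$ (written in the weight basis) to a single weight vector, I would use the operators $D(e_j,re_i)\in\mathcal{D}_{\text{div}}$ (for $i\neq j$), which act by
\[D(e_j,re_i).v(n)=\bigl[(n_j+\sigma_j)\,v+r\,E_{ij}\,v\bigr]\otimes q^{n+re_i},\]
where $E_{ij}e_I=\pm e_{(I\setminus\{i\})\cup\{j\}}$ if $i\in I$ and $j\notin I$, and is zero otherwise. When $n_j+\sigma_j=0$ for some $j$ (possibly after a preliminary shift, if $\sigma_j\in\mathbb{Z}$), the action isolates $r\,E_{ij}v$ at the shifted degree, and $E_{ij}$ strictly shrinks the weight-support of $v$ by discarding every $e_I$ with $j\in I$ or $i\notin I$. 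Iterating with suitably chosen $(i,j)$-pairs reduces the support to a single weight vector $u(m)$, while the non-vanishing $(n+\sigma)\wedge v\neq 0$ ensures the process does not collapse into $W_k^{\sigma}$ prematurely; each shift parameter $r$ can be chosen to avoid the single forbidden value giving $m=-\sigma$. The main obstacle lies in this last step: careful case analysis is required to guarantee that the support strictly decreases at each stage, and to handle the case where no coordinate of $\sigma$ is integral (so $n_j+\sigma_j=0$ cannot be arranged by shifts within $\mathbb{Z}^{N+1}$), which forces one to work with combinations of $D(u,re_l)$ having $u\perp(n+\sigma)$ and $u_l=0$ in order to realize the $E_{ij}$-reductions.
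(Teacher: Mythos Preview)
Your setup is correct and matches the paper: pick a homogeneous $v(n)\in M\setminus\tilde{W}_k^{\sigma}$, observe $n+\sigma\neq 0$, and note $v\notin(n+\sigma)\wedge\bigwedge^{k-1}(\mathbb{C}^{N+1})$. The genuine gap is in the reduction step. Your primary device---arranging $n_j+\sigma_j=0$ so that $D(e_j,re_i)$ acts as a pure $E_{ij}$---is unavailable when no coordinate of $\sigma$ is integral, and your fallback (``combinations of $D(u,re_l)$ with $u\perp(n+\sigma)$ and $u_l=0$'') yields only the rank-one operators $e_lu^T=\sum_{j\neq l}u_jE_{lj}$ subject to one extra linear constraint on $u$; you have not shown how to isolate individual $E_{ij}$'s from this, nor how to keep the result nonzero modulo $W_k^{\sigma}$ after each step (your Koszul observation $(n+\sigma)\wedge v\neq 0$ is a condition at degree $n$, not at the shifted degree $n+re_i$). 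Also, your formula for the $E_{ij}$-action on wedges is reversed: $E_{ij}e_I\neq 0$ precisely when $j\in I$ and $i\notin I$.

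The paper bypasses all of these difficulties with one idea you are missing: change basis. Since $(e_x\mid n+\sigma)\neq 0$ for some $x$, the set $\{n+\sigma,e_1,\dots,\widehat{e_x},\dots,e_{N+1}\}$ is a basis, so modulo $\tilde{W}_k^{\sigma}$ one may write $v$ as a combination of wedges $e_{p_1}\wedge\cdots\wedge e_{p_k}$ with every $p_j\neq x$; in particular $E_{yx}v=0$ modulo $\tilde{W}_k^{\sigma}$ for any $y\neq x$. Then the composite $D(e_x,-e_y)D(e_{i_1},e_y)$ (which returns to degree $n$, so no shift of $\sigma$ is needed) applied to $v(n)$ yields, after subtracting the term $(e_x\mid n+\sigma)(e_{i_1}\mid n+\sigma)v(n)\in M$, exactly $(e_x\mid n+\sigma)\,E_{yi_1}v(n)$ modulo $\tilde{W}_k^{\sigma}$, with the other cross terms killed because they involve $E_{yx}$. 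Choosing $i_1$ to be the first index of a nonzero coefficient $\gamma_{i_1,\dots,i_k}$ and iterating $k$ times peels off one index at a time and lands on a nonzero scalar multiple of a pure wedge $e_y\wedge e_{i_1}\wedge\cdots\wedge e_{i_{k-1}}(n)$. This is the missing mechanism that makes the reduction go through uniformly in $\sigma$.
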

\begin{proof}
Since $M$ properly contains $\tilde{W}_k^{\sigma}$, it contains a nonzero vector $v(n)\in F^{\sigma}(\omega_k)\setminus\tilde{W}_k^{\sigma}$ where $n+\sigma\neq0$. Thus $(e_x|n+\sigma)\neq 0$ for some $x\in\{1,\dots,N+1\}$ and so $\{n+\sigma,e_1,\dots,e_{x-1},e_{x+1},\dots,e_{N+1}\}$ is a basis for $\mathbb{C}^{N+1}$. Write 
\[v=\sum_{\substack{p_1,\dots,p_k=1\\p_1,\dots,p_k\neq x}}^{N+1}\gamma_{p_1,\dots,p_k}e_{p_1}\wedge\dots\wedge e_{p_k}\]
(modulo $\tilde{W}_k^{\sigma}$) and assume that the coefficients satisfy $\gamma_{p_1,\dots,p_k}=\text{sgn}(\pi)\gamma_{\pi(p_1,\dots,p_k)}$ for any permutation $\pi$. Since $v\neq0$, at least one of the above coefficients is nonzero, say $\gamma_{i_1,\dots,i_k}$. Assume indices $i_1,\dots,i_k,x,y$ are distinct. Note that in the right hand side of
\begin{multline*}
D(e_x,-e_{y}).D(e_{i_1},e_{y}).v(n)\\=(e_x|n+\sigma)(e_{i_1}|n+\sigma)v(n)
-(e_{i_1}|n+\sigma)E_{yx}v(n)+(e_x|n+\sigma)E_{yi_1}v(n)-E_{yx}E_{yi_1}v(n),
\end{multline*}
the first term is in $M$, while the second and fourth terms are both zero since $e_x$ does not appear in $v$. Thus the third term, call it $v^{(1)}(n)$, is in $M$ and recall $(e_x|n+\sigma)\neq0$. Then,
\begin{align*}
v^{(1)}(n) &= (e_x|n+\sigma)E_{yi_1}v(n)\\
&= (e_x|n+\sigma)\sum_{\substack{p_2,\dots,p_k=1\\p_2,\dots,p_k\neq x}}^{N+1}\gamma_{i_1,p_2,\dots,p_k}e_{y}\wedge e_{p_2}\wedge\dots\wedge e_{p_k}(n)\\
&\quad+ (e_x|n+\sigma)\sum_{\substack{p_1,p_3,\dots,p_k=1\\p_1,p_3,\dots,p_k\neq x}}^{N+1}\gamma_{p_1,i_1,p_3,\dots,p_k}e_{p_1}\wedge e_{y}\wedge e_{p_3}\wedge\dots\wedge e_{p_k}(n)\\
&\quad+ \dots\\
&\quad+ (e_x|n+\sigma)\sum_{\substack{p_1,\dots,p_{k-1}=1\\p_1,\dots,p_{k-1}\neq x}}^{N+1}\gamma_{p_1,\dots,p_{k-1},i_1}e_{p_1}\wedge\dots\wedge e_{p_{k-1}}\wedge e_{y}(n)
\end{align*}
Relabelling indices, the above can be written
\[(e_x|n+\sigma)\sum_{\substack{p_2,\dots,p_k=1\\p_2,\dots,p_k\neq x}}^{N+1}\Gamma e_{y}\wedge e_{p_2}\wedge\dots\wedge e_{p_k}(n),\]
with $\Gamma=\gamma_{i_1,p_2,\dots,p_k}+(-1)\gamma_{p_2,i_1,p_3,\dots,p_k}+\dots+(-1)^{k-1}\gamma_{p_2,\dots,p_{k},i_1}$ which follows from the alternating property of the wedge product. By the assumption on the coefficients, $\gamma_{p_2,\dots,p_s,i_1,p_{s+1},\dots,p_k}=(-1)^{s}\gamma_{i_1,p_2,\dots,p_k}$ for $2\leq s\leq k$, and so
\[v^{(1)}(n)=k(e_x|n+\sigma)\sum_{\substack{p_2,\dots,p_k=1\\p_2,\dots,p_k\neq x}}^{N+1}\gamma_{i_1,p_2,\dots,p_k} e_{y}\wedge e_{p_2}\wedge\dots\wedge e_{p_k}(n).\]
Note that the coefficients in this summation are the same as those from $v$ which have first index equal to $i_1$. Doing this process again, by applying $D(e_x,-e_{i_1})D(e_{i_2},e_{i_1})$ to $v^{(1)}(n)$ in the first step, will yield 
\[v^{(2)}(n)=k^2(e_x|n+\sigma)^2\sum_{\substack{p_{3},\dots,p_k=1\\p_{3},\dots,p_k\neq x}}^{N+1}\gamma_{i_1,i_2,p_{3},\dots,p_k} e_{y}\wedge e_{i_1}\wedge e_{p_3}\wedge\dots\wedge e_{p_k}(n).\]
Repeating (with suitable choices of indices) another $k-2$ times yields
\[k!(e_x|n+\sigma)^k\gamma_{i_1,\dots,i_k}e_y\wedge e_{i_1}\wedge\dots\wedge e_{i_{k-1}}(n)\]
which is a nonzero because both $(e_x|n+\sigma)\neq 0$ and $\gamma_{i_1,\dots,i_k}\neq0$. The wedge product $e_y\wedge e_{i_1}\wedge\dots\wedge e_{i_{k-1}}$ is a weight vector in $\bigwedge^k(\mathbb{C}^{N+1})$ and so the proof is complete.
\end{proof}
\begin{thm}\label{results}
Let $N\geq 1$ and $0\leq k\leq N$.
\begin{enumerate}
\item[(a)]
    If $\lambda\neq0,\omega_k$ then $F^{\sigma}(\lambda)$ is irreducible.
\item[(b)]
    $W_k^{\sigma}$ and $F^{\sigma}(\omega_k)/\tilde{W}_k^{\sigma}$ are irreducible and $F^{\sigma}(\omega_k)/\tilde{W}_k^{\sigma}\cong W_{k+1}^{\sigma}$
\end{enumerate}
\end{thm}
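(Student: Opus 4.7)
Part (a) is immediate: the hypothesis that $\lambda\ne 0$ and $\lambda\ne\omega_k$ for any $k$ is precisely the non-minuscule condition, so $F^\sigma(\lambda)$ is irreducible by Theorem \ref{non minuscule lemma}. The substantive work lies in part (b), for which the plan is first to build the claimed isomorphism via an ``exterior-derivative'' homomorphism, and then to use it together with Lemma \ref{wedge weight vector} to deduce irreducibility.

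For the isomorphism I will define $\phi_k\colon F^\sigma(\omega_k)\to F^\sigma(\omega_{k+1})$ by $\phi_k(v(n))=((n+\sigma)\wedge v)(n)$, under the identification $V(\omega_k)\cong\bigwedge^k(\mathbb{C}^{N+1})$. To verify $\phi_k$ intertwines the $\mathcal{D}_{\text{div}}$-action I expand both sides of $\phi_k(D(u,r).v(n))=D(u,r).\phi_k(v(n))$ using that $ru^T$ acts as a derivation on $\bigwedge^{\bullet}(\mathbb{C}^{N+1})$; since $(ru^T)x=(u|x)r$ for $x\in\mathbb{C}^{N+1}$, the vector $(ru^T)v$ lies in the ideal generated by $r$ for every $v$, so the apparent discrepancy $r\wedge(ru^T)v$ vanishes and the two sides agree. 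The image is visibly $W_{k+1}^\sigma$; for the kernel, $(n+\sigma)\wedge v=0$ with $n+\sigma\ne 0$ forces $v\in(n+\sigma)\wedge\bigwedge^{k-1}(\mathbb{C}^{N+1})$ (a standard exterior algebra fact), yielding $W_k^\sigma$, while at the exceptional grading $n=-\sigma$ (possible only when $\sigma\in\mathbb{Z}^{N+1}$) all of $V(\omega_k)\otimes q^{-\sigma}$ is annihilated, completing $\tilde{W}_k^\sigma$. This gives $F^\sigma(\omega_k)/\tilde{W}_k^\sigma\cong W_{k+1}^\sigma$, and consequently irreducibility of $W_k^\sigma$ (for $k\ge 1$) reduces, via the iso at level $k-1$, to irreducibility of $F^\sigma(\omega_{k-1})/\tilde{W}_{k-1}^\sigma$.

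It therefore suffices to show $F^\sigma(\omega_k)/\tilde{W}_k^\sigma$ is irreducible for each $0\le k\le N$. For $k=0$ or $k=N$ the iso identifies the quotient with (a submodule of) $F^\sigma(0)=\mathbb{C}[q^{\pm 1}]$ on which $D(u,r).q^n=(u|n+\sigma)q^{n+r}$; iterated application of $D(e_i,re_j)$ with $n_i+\sigma_i\ne 0$ shifts any $q^n$ (with $n+\sigma\ne 0$) to any other such $q^m$ (possibly via an intermediate shift), handling these cases directly. For $1\le k\le N-1$, given a submodule $M\supsetneq\tilde{W}_k^\sigma$, Lemma \ref{wedge weight vector} supplies $u(n)\in M$ with $u=e_{i_1}\wedge\dots\wedge e_{i_k}$ and $n+\sigma\ne 0$. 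The plan is to apply
\[D(e_j,re_l).u(n)=(n_j+\sigma_j)u(n+re_l)+rE_{l,j}u(n+re_l),\]
choosing $j\notin\{i_1,\dots,i_k\}$ (where $E_{l,j}u=0$) to produce clean grading shifts, and choosing $j\in\{i_1,\dots,i_k\}$ with $l\notin\{i_1,\dots,i_k\}$ to produce index-swapped companions $u'=E_{l,j}u$; combining the two kinds of moves should reach every weight vector in $V(\omega_k)$ at every grading, and since $V(\omega_k)$ is minuscule all weight spaces are one-dimensional, so this exhausts $F^\sigma(\omega_k)$ modulo $\tilde{W}_k^\sigma$.

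The main obstacle is the degenerate configuration in which $n+\sigma$ is supported entirely on $\{e_{i_1},\dots,e_{i_k}\}$: then every clean shift with $j\notin\{i_1,\dots,i_k\}$ evaluates to zero. To escape it I will apply $D(e_{i_p},r)$ with $r_{i_p}=0$ but $r$ possessing a nonzero component outside $\{i_1,\dots,i_k\}$, which outputs a sum of $u(n+r)$ (now at a grading whose support extends beyond $\{i_1,\dots,i_k\}$) and index-swapped companions; the companion terms can if needed be isolated by subtracting a parallel shift of $u$ obtained once the clean-shift arguments are available again. Because $V(\omega_k)$ is minuscule, Corollary \ref{length at least 3} fails and the $\theta$-string bootstrapping of Section 4 is unavailable, so the bookkeeping of these grading-and-index manipulations is the principal technical labor.
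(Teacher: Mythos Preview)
Your plan matches the paper's approach in its essential architecture: the same exterior-derivative map $\psi_k=\phi_k$ to establish $F^\sigma(\omega_k)/\tilde W_k^\sigma\cong W_{k+1}^\sigma$, Lemma~\ref{wedge weight vector} to extract a monomial weight vector for $1\le k\le N-1$, and then a mix of grading shifts $D(e_j,re_l)$ (with $j$ outside the index set) and index swaps (with $j$ inside) to exhaust $F^\sigma(\omega_k)$. Two points deserve comment.

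First, the ``main obstacle'' you anticipate for $1\le k\le N-1$ does not actually occur. The \emph{proof} of Lemma~\ref{wedge weight vector} manufactures its output monomial $e_y\wedge e_{i_1}\wedge\dots\wedge e_{i_{k-1}}(n)$ with all indices distinct from a fixed $x$ for which $(e_x\mid n+\sigma)\ne 0$; so from the outset there is a $j\notin\{y,i_1,\dots,i_{k-1}\}$ giving nonzero clean shifts. The paper exploits this by first using $D(e_x,r)$ with $r\in\{e_x\}^\perp$, together with the pair $D(e_x\mp e_a,r_x(e_x\pm e_a))$ for an auxiliary $a\notin\{x,y,i_1,\dots,i_{k-1}\}$ (available precisely because $k\le N-1$), to obtain $u(m)$ for \emph{all} $m\in\mathbb Z^{N+1}$; only then does it perform index swaps, at which point the degenerate support configuration is irrelevant since $u(m-e_a)$ is already in hand. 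Your proposed workaround would also succeed, but it is unnecessary once you use the extra information in Lemma~\ref{wedge weight vector}'s construction.

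Second, your treatment of $k=N$ is cleaner than the paper's. You push through the isomorphism to $W_{N+1}^\sigma\subset F^\sigma(\omega_{N+1})=F^\sigma(0)=\mathbb C[q^{\pm1}]$ and thereby reduce to the same scalar-module computation that handles $k=0$. The paper instead argues directly inside $F^\sigma(\omega_N)$, which forces an additional case analysis involving highest and lowest weight vectors of $V(\omega_N)$ and the special grading where $n+\sigma$ is a multiple of $e_x$; your route bypasses all of that. (For the scalar module itself the paper uses the operators $D(e_j\mp e_x,r_x(e_x\pm e_j))$ rather than your ``intermediate shift'' with basis vectors; either device works.)
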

\begin{proof}
Part (a) follows from Theorem \ref{non minuscule lemma}.

For $0\leq k\leq N$, the linear map $\psi_k:F^{\sigma}(\omega_k) \rightarrow F^{\sigma}(\omega_{k+1})$ defined by
\[v_1\wedge\dots\wedge v_k(n) \mapsto (n+\sigma)\wedge v_1\wedge\dots\wedge v_k(n)\]
is a $\mathcal{D}_{\text{div}}$-module homomorphism, where ker$(\psi_k)=\tilde{W}_k^{\sigma}$ and Im$(\psi_k)=W_{k+1}^{\sigma}$. Thus
\[F^{\sigma}(\omega_k)/\tilde{W}_k^{\sigma}\cong W_{k+1}^{\sigma}.\]
Lemma \ref{wedge weight vector} will be used to show that $F^{\sigma}(\omega_k)/\tilde{W}_k^{\sigma}$ is irreducible (and hence that $W_{k+1}^{\sigma}$ is irreducible) for $1\leq k\leq N-1$ with $N\neq2$. The remaining cases are $F^{\sigma}(\omega_0)/\tilde{W}_0^{\sigma}$ and $F^{\sigma}(\omega_N)/\tilde{W}_N^{\sigma}$ for any $N\geq 1$. The $F^{\sigma}(\omega_0)/\tilde{W}_0^{\sigma}$ case is handled first by showing directly that $W_1^{\sigma}$ is irreducible.

Any nonzero submodule $M_1$ of $W_1^{\sigma}=\bigoplus_{n\in\mathbb{Z}^{N+1}}\mathbb{C}(n+\sigma)\otimes q^n$, contains a homogeneous vector $(n+\sigma)\otimes q^n$ for some $n\in\mathbb{Z}^{N+1}$ where $(n+\sigma)\neq0$. Choose $e_x$ so that $(e_x|n+\sigma)\neq0$ and $r\in\mathbb{Z}^{N+1}$ with $(e_x|r)=0$. Then $D(e_x,r).(n+\sigma)\otimes q^n=(e_x|n+\sigma)(n+r+\sigma)\otimes q^{n+r}$ and so $(n+r+\sigma)\otimes q^{n+r}\in M_1$ for any $r\in\{e_x\}^{\perp}\cap\mathbb{Z}^{N+1}$. Let $e_j\neq e_x$, and $r_x\in\mathbb{Z}$. Then
\begin{multline*}
D(e_j-e_x,r_x(e_x+e_j)).(n+r-r_xe_j+\sigma)\otimes q^{n+r-r_xe_j}\\
=(e_j-e_x|n+r-r_xe_j+\sigma)(n+r+r_xe_x+\sigma)\otimes q^{n+r+r_xe_x}.
\end{multline*}
If $(e_j-e_x|n+r-r_xe_j+\sigma)\neq0$ this implies $(m+\sigma)\otimes q^m\in M_1$ for any $m\in\mathbb{Z}^{N+1}$. Otherwise if $(e_j-e_x|n+r-r_xe_j+\sigma)=0$ then consider instead
\begin{multline*}
D(e_j+e_x,r_x(e_x-e_j).(n+r+r_xe_j+\sigma)\otimes q^{n+r+r_xe_j}\\
=(e_j+e_x|n+r+r_xe_j+\sigma)(n+r+r_xe_x+\sigma)\otimes q^{n+r+r_xe_x}
\end{multline*}
so that $(e_j+e_x|n+r+r_xe_j+\sigma)\neq 0$ and again this yields that $(m+\sigma)\otimes q^m\in M_1$ for any $m\in\mathbb{Z}^{N+1}$. Hence $W_1^{\sigma}$ is irreducible.

It remains to show that $F^{\sigma}(\omega_k)/\tilde{W}_k^{\sigma}$ is irreducible for $1\leq k\leq N$. Let $M$ be a submodule of $F^{\sigma}(\omega_k)$ which properly contains $\tilde{W}_k^{\sigma}$.

For $N\geq 2$ with $1\leq k\leq N-1$, Lemma \ref{wedge weight vector} shows that $M$ contains a vector of the form $e_y\wedge e_{i_1}\wedge\dots\wedge e_{i_{k-1}}(n)$, with $n$ such that $(e_x|n+\sigma)\neq0$ for some $x\in\{1,\dots,N+1\}$. For any $r\in\{e_x\}^{\perp}$,
\[D(e_x,r).e_y\wedge e_{i_1}\wedge\dots\wedge e_{i_{k-1}}(n)=(e_x|n+\sigma)e_y\wedge e_{i_1}\wedge\dots\wedge e_{i_{k-1}}(n+r).\]
Thus $e_y\wedge e_{i_1}\wedge\dots\wedge e_{i_{k-1}}(n+r)\in M$ for any $r\in\{e_x\}^{\perp}$. So for $a\not\in\{x,y,i_1,\dots,i_{k-1}\}$,
\begin{multline*}
D(e_x-e_a,r_x(e_x+e_a)).e_y\wedge e_{i_1}\wedge\dots\wedge e_{i_{k-1}}(n+r-r_xe_a)\\
=(e_x-e_a|n+r-r_xe_a+\sigma)e_y\wedge e_{i_1}\wedge\dots\wedge e_{i_{k-1}}(n+r+r_xe_x)
\end{multline*}
If $(e_x-e_a|n+r-r_xe_a+\sigma)\neq0$ this yields that $e_y\wedge e_{i_1}\wedge\dots\wedge e_{i_{k-1}}(m)\in M$ for all $m\in\mathbb{Z}^{N+1}$. If $(e_x-e_a|n+r-r_xe_a+\sigma)=0$ apply instead $D(e_x+e_a,r_x(e_x-e_a))$ to $e_y\wedge e_{i_1}\wedge\dots\wedge e_{i_{k-1}}(n+r+r_xe_a)$ and obtain the same result.

Using the fact that $e_y\wedge e_{i_1}\wedge\dots\wedge e_{i_{k-1}}(m)\in M$ for all $m\in\mathbb{Z}^{N+1}$, any vector of the form $e_{b_1}\wedge e_{b_2}\wedge\dots\wedge e_{b_k}(m)$ can be obtained by doing the following. To replace $e_j\in\{e_y,e_{i_1},\dots,e_{i_{k-1}}\}$ with $e_a$, $a\not\in\{y,i_1,\dots,i_{k-1}\}$ in $e_y\wedge e_{i_1}\wedge\dots\wedge e_{i_{k-1}}(m)$, note that
\begin{multline*}
D(e_j,e_a).e_y\wedge e_{i_1}\wedge\dots\wedge e_j\wedge\dots\wedge e_{i_{k-1}}(m-e_a)\\
=(e_j|m-e_a+\sigma)e_y\wedge e_{i_1}\wedge\dots\wedge e_j\wedge\dots\wedge e_{i_{k-1}}(m)+e_y\wedge e_{i_1}\wedge\dots\wedge e_a\wedge\dots\wedge e_{i_{k-1}}(m).\end{multline*}
The first term on the right hand side is already in $M$ which implies $e_y\wedge e_{i_1}\wedge\dots\wedge e_a\wedge\dots\wedge e_{i_{k-1}}(m)\in M$. This process of swapping out $e_j$'s can be repeated until the desired vector is obtained. Vectors of the form $e_{b_1}\wedge e_{b_2}\wedge\dots\wedge e_{b_k}(m)$ form a basis for $V(\omega_k)(m)$, and thus $M$ must be all of $F^{\sigma}(\omega_k)$. It follows that $F^{\sigma}(\omega_k)/\tilde{W}_k^{\sigma}$ is irreducible.

Consider the case $k=N$ for $N\geq 1$. $M$ contains a nonzero vector $v(n)$, where $v(n)\not\in\tilde{W}_N^{\sigma}$ such that $n+\sigma\neq0$. Suppose $(e_x|n+\sigma)\neq0$ for some $x\in\{1,\dots,N+1\}$, and so $\{n+\sigma,e_1,\dots,e_{x-1},e_{x+1},\dots,e_{N+1}\}$ is a basis for $\mathbb{C}^{N+1}$. From the definition of $\tilde{W}_N^{\sigma}$ it follows that $v(n)$ is a nonzero scalar multiple of $e_1\wedge\dots\wedge e_{x-1}\wedge e_{x+1}\wedge\dots\wedge e_{N+1}(n)$ plus some vector in $\tilde{W}_N^{\sigma}$. Assume without loss of generality that $v(n)=e_1\wedge\dots\wedge e_{x-1}\wedge e_{x+1}\wedge\dots\wedge e_{N+1}(n)$. Note that
\[V(\omega_N)(n)= \mathbb{C} v(n)\oplus(n+\sigma)\wedge\left(\bigwedge^{N-1}\mathbb{C}^{N+1}\right)(n)\]
and so $V(\omega_N)(n)\subset M$. For any $r\in\{e_x\}^{\perp}\cap\mathbb{Z}^{N+1}$, $D(e_x,r).v(n)=(e_x|n+\sigma)v(n+r)$, thus $v(n+r)\in M$. Since $(e_x|n+r+\sigma)=(e_x|n+\sigma)\neq0$ it follows that
\[V(\omega_N)(n+r)= \mathbb{C} v(n+r)\oplus(n+r+\sigma)\wedge\left(\bigwedge^{N-1}\mathbb{C}^{N+1}\right)(n+r)\]
and so $V(\omega_N)(n+r)\subset M$ for any $r\in\{e_x\}^{\perp}\cap\mathbb{Z}^{N+1}$. It remains to show that $V(\omega_N)(n+r+r_xe_x)\subset M$ for any $r_x\in\mathbb{Z}$.

Let $j>x$, $r_x\in\mathbb{Z}\setminus\{0\}$, and $r\in\{e_x\}^{\perp}\cap\mathbb{Z}^{N+1}$. Then
\[D(e_j,r_xe_x).v_{\omega_N}(n+r)=(e_j|n+r+\sigma)v_{\omega_N}(n+r+r_xe_x),\]
where $v_{\omega_N}$ is a highest weight vector in $V(\omega_N)$. If $(e_j|n+r+\sigma)\neq0$ then $v_{\omega_N}(n+r+r_xe_x)\in M$, and by the proof of Lemma \ref{non minuscule lemma} $V(\omega_N)(n+r+r_xe_x)\subset M$. Let $v_{\ell}$ be a lowest weight vector of $V(\omega_N)$, and $j<x$, then
\[D(e_j,r_xe_x).v_{\ell}(n+r)=(e_i|n+r+\sigma)v_{\ell}(n+r+r_xe_x).\]
If $(e_i|n+r+\sigma)\neq0$ then $v_{\ell}(n+r+r_xe_x)\in M$. Again the proof of Lemma \ref{non minuscule lemma} shows how to generate all of $V(\omega_N)(n+r+r_xe_x)$, only this time the $y_p=E_{ij}$ where $i<j$ are applied to lowest weight vector $v_{\ell}(n+r+r_xe_x)$. Suppose for some $r\in\{e_x\}^{\perp}\cap\mathbb{Z}^{N+1}$, say $r=r_0$, that $(e_j|n+r_0+\sigma)=0$ for all $j\neq x$; i.e. $(n+r_0+\sigma)=Ke_x\neq0$. Then for $j\neq x$
\begin{multline*}
D(e_j-e_x,r_x(e_j+e_x)).v(n+r_0-r_xe_j)=K v(n+r_0+r_xe_x)\\
+r_xe_1\wedge\dots\wedge e_{j-1}\wedge e_x\wedge e_{j+1}\wedge\dots\wedge e_{x-1}\wedge e_{x+1}\wedge\dots\wedge e_{N+1}(n+r_0+r_xe_x).
\end{multline*}
Since $(n+r_0+r_xe_x+\sigma)$ is a scalar multiple of $e_x$, the second term above is in $\tilde{W}_N^{\sigma}$, and thus $v(n+r_0+r_xe_x)\in M$. If $(e_x|n+r_0+r_xe_x+\sigma)\neq0$ then
\begin{multline*}
V(\omega_N)(n+r_0+r_xe_x)\\
=\mathbb{C} v(n+r_0+r_xe_x)\oplus(n+r_0+r_xe_x+\sigma)\wedge\left(\bigwedge^{N-1}\mathbb{C}^{N+1}\right)(n+r_0+r_xe_x)
\end{multline*}
and so $V(\omega_N)(n+r_0+r_xe_x)\subset M$. If $(e_x|n+r_0+r_xe_x+\sigma)=0$, then $(n+r_0+r_xe_x+\sigma)=0$, in which case $V(\omega_N)(n+r_0+r_xe_x)\subset\tilde{W}_N^{\sigma}\subset M$. Thus $V(\omega_N)(m)\subset M$ for all $m\in\mathbb{Z}^{N+1}$ and so $M=F^{\sigma}(\omega_N)$.
\end{proof}
\section{Acknowledgements}
Many thanks to Professor Yuly Billig for his expert guidance on this project.


\end{document}